\newtheorem{theorem}{Theorem}[section]
\newtheorem{lemma}[theorem]{Lemma}
\newtheorem{corollary}[theorem]{Corollary}
\numberwithin{equation}{section}
\theoremstyle{definition}
\theoremstyle{remark}
\newcommand{\brac}[1]{\left(#1\right)}
\newcommand{\brab}[1]{\left\{#1\right\}}
\newcommand{\bk}{{\boldsymbol{k}}}
\newcommand{\br}{{\boldsymbol{r}}}
\newcommand{\bs}{{\boldsymbol{s}}}
\newcommand{\bx}{{\boldsymbol{x}}}
\newcommand{\bX}{{\boldsymbol{X}}}
\newcommand{\by}{{\boldsymbol{y}}}
\newcommand{\bW}{{\boldsymbol{W}}}
\newcommand{\rd}{{\rm d}}
\def\ZZd{{\mathbb Z}^d}
\def\RR{{\mathbb R}}
\def\RRd{{\mathbb R}^d}
\def\NN{{\mathbb N}}
\def\NNd{{\NN}^d}
\def\NN{{\mathbb N}}
\def\RR{{\mathbb R}}
\def\Vv{{\mathbb P}}
\def\Vv{{\mathcal P}}
\def\NNd{{\mathbb N}^d}
\def\RRd{{\mathbb R}^d}
\def\ZZd{{\mathbb Z}^d}
\def\Hh{{\mathcal H}}
\def\Pp{{\mathcal P}}
\def\Ss{{\mathcal S}}
\def\Vv{{\mathcal V}}
\def\NN{{\mathbb N}}
\def\RR{{\mathbb R}}
\def\NNd{{\mathbb N}^d}
\def\RRd{{\mathbb R}^d}
\def\Wap{W^r_p}
\newcommand{\norm}[2]{\left\|{#1}\right\|_{#2}}
\title{ \bf{Weighted  hyperbolic cross polynomial approximation}}
\author[a]{Dinh D\~ung}
\affil[a]{Information Technology Institute, Vietnam National University, Hanoi
	\protect\\
	144 Xuan Thuy, Cau Giay, Hanoi, Vietnam
	\protect\\
	Email: dinhzung@gmail.com
	\protect\\
	\bigskip
	\bigskip
{\it Dedicated to the 90th birthday of Professor Vladimir Tikhomirov}
	}
\date{\today}
\begin{document}
\maketitle

\begin{abstract}
 We study linear polynomial approximation  of   functions in weighted Sobolev spaces $W^r_{p,w}(\mathbb{R}^d)$ of mixed smoothness $r \in \mathbb{N}$, and their optimality in terms of Kolmogorov and linear  $n$-widths of the unit ball $\boldsymbol{W}^r_{p,w}(\mathbb{R}^d)$ in these spaces. The approximation error is measured by the norm of the weighted Lebesgue space $L_{q,w}(\mathbb{R}^d)$. The weight $w$ is a tensor-product  Freud  weight. 
For $1\le p,q \le \infty$ and $d=1$, we prove that the  polynomial approximation  by de la Vall\'ee Poussin  sums  of  the orthonormal polynomial expansion of functions with respect to the weight $w^2$, is asymptotically optimal in terms of  relevant linear $n$-widths  $\lambda_n\big(\bW^r_{p,w}(\RR), L_{q,w}(\RR)\big)$ and Kolmogorov $n$-widths  $d_n\big(\bW^r_{p,w}(\RR), L_{q,w}(\RR)\big)$ for $1\le q \le p <\infty$. 
For $1\le p,q \le \infty$ and $d\ge 2$, we  construct linear methods of hyperbolic cross polynomial approximation based on tensor product of  successive differences of dyadic-scaled de la Vall\'ee Poussin  sums,  which are counterparts  of  hyperbolic cross trigonometric linear polynomial approximation, and give some upper bounds of the error of these approximations  for various pair $p,q$ with $1 \le p, q \le \infty$. For  some particular weights $w$ and $d \ge 2$, we prove the  right convergence rate  of  $\lambda_n\big(\bW^r_{2,w}(\RRd), L_{2,w}(\RRd)\big)$ and $d_n\big(\bW^r_{2,w}(\RRd), L_{2,w}(\RRd)\big)$ which is performed by a constructive hyperbolic cross polynomial approximation.
 
 	\medskip
	\noindent
	{\bf Keywords and Phrases}:  Weighted approximation; Hyperbolic cross polynomial approximation;  De la Vall\'ee Poussin  sums; Kolmogorov widths; Linear widths; Weighted  Sobolev space of mixed smoothness;  Right convergence rate. 
	
	\medskip
	\noindent
	{\bf MSC (2020)}:    41A15; 41A25; 41A46; 41A63; 41A81.
	
\end{abstract}

\section{Introduction}
\label{Introduction}
We investigate weighted  linear hyperbolic cross polynomial approximations of functions  on $\RRd$ from weighted  Sobolev spaces of  mixed smoothness and their optimalities in terms of Kolmogorov and linear $n$-widths. 

We begin with a notion of weighted  Sobolev spaces of  mixed smoothness.  
Let 
\begin{equation} \nonumber
	w(\bx):= w_{\lambda,a,b}(\bx) := \bigotimes_{i=1}^d w(x_i), \ \ \bx \in \RRd,
\end{equation}
be the tensor product of $d$ copies of a generating univariate Freud  weight of the form
\begin{equation} \label{w(x)}
	w(x):= 	\exp \brac{- a|x|^\lambda + b},
	 \ \ \lambda > 1,  \ a >0, \ b \in \RR.
\end{equation} 
The most important parameter in the weight $w$ is $\lambda$. The parameter $b$ which produces only a positive constant in the weight $w$  is introduced for a certain normalization for instance, for the standard Gaussian weight which is one of the most important weights. 
In what follows,  we fix the  parameters $\lambda,a, b$ in the weight $w$.

Let  $1\leq p \le \infty$ and $\Omega$ be a Lebesgue measurable set on $\RRd$. 
We denote by  $L_{p,w}(\Omega)$ the weighted Lebesgue space  of all measurable functions $f$ on $\Omega$ such that the norm
\begin{align} \label{L-Omega}
\|f\|_{L_{p,w}(\Omega)} : = 
\begin{cases}
\bigg( \int_\Omega |f(\bx)w(\bx)|^p  \rd \bx\bigg)^{1/p},& \ \ 1\le p <  \infty; \\
\operatorname{ess \, sup}_{\bx \in \Omega}|f(\bx)w(\bx)|,& \ \ p = \infty,
\end{cases}
\end{align}
is finite.

 For $r \in \NN$ and $1 \le p \le \infty$, we define the weighted  Sobolev space $W^r_{p,w}(\Omega)$ of mixed smoothness $r$  as the normed space of all functions $f\in L_{p,w}(\Omega)$ such that the weak  partial derivative $D^{\bk} f$ belongs to $L_{p,w}(\Omega)$ for  every $\bk \in \NNd_0$ satisfying the inequality $|\bk|_\infty \le r$. The norm of a  function $f$ in this space is defined by
\begin{align} \nonumber
	\|f\|_{W^r_{p,w}(\Omega)}: = \Bigg(\sum_{|\bk|_\infty \le r} \|D^{\bk} f\|_{L_{p,w}(\Omega)}^p\Bigg)^{1/p}.
\end{align}

Let $\gamma$ be the standard $d$-dimensional Gaussian measure  with the density function 
$$
v_{\operatorname{g}}(\bx): = (2\pi)^{-d/2}\exp (- |\bx|_2^2/2).
$$
The well-known  spaces $L_p(\Omega;\gamma)$ and $\Wap(\Omega; \gamma)$ 
which are used in many applications, are defined  in the same way by replacing the norm \eqref{L-Omega} with the norm
$$
\|f\|_{L_p(\Omega; \gamma)} : = 
\bigg( \int_\Omega |f(\bx)|^p \gamma(\rd \bx)\bigg)^{1/p}
=
\bigg( \int_\Omega |f(\bx)\brac{v_{\operatorname{g}}}^{1/p}(\bx)|^p  \rd \bx\bigg)^{1/p}.
$$
Thus, the spaces $L_p(\Omega;\gamma)$ and $\Wap(\Omega; \gamma)$ coincide with  $L_{p,w}(\Omega)$ and $W^r_{p,w}(\Omega)$, where $w:= \brac{v_{\operatorname{g}}}^{1/p}$
for  a fixed $1 \le p < \infty$.  

Let $X$ be a Banach space and $F$ a central symmetric compact set in $X$. By linear approximation we understand an approximation of elements in $F$ by elements from a fixed finite-dimensional subspace $L$. For a given number $n \in \NN_0$, a natural question arising is how to choose  an optimal subspace of dimension at most $n$ for this approximation. This leads to the concept of the Kolmogorov $n$-width introduced  in 1936 \cite{Kol36}. 
The Kolmogorov $n$-width  of $F$ is defined by
\begin{equation*}
	d_n(F,X):= \inf_{L_{n}}\, \sup_{f\in F}\, \inf_{g\in L_n}\|f-g\|_X, 
\end{equation*}
where the left-most  infimum is  taken over all  subspaces $L_{n}$ of dimension  $\le n$ in $X$.

The Kolmogorov $n$-width provides a way to determine optimal approximation $n$-dimensional subspaces. Clearly, we would like to use as simple approximation operators as possible. In particular, the restriction by linear  operators leads to  the linear $n$-width of  $F$ in $X$ which was introduced by V.M. Tikhomirov \cite{Tikh1960} in 1960.
This $n$-width  is defined by
$$
\lambda_n(F,X):=\inf_{A_n} \sup_{f\in F} \|f-A_n(f)\|_X,
$$
where the infimum is taken over all linear operators $A_n$ in  $X$ with ${\rm rank}\, A_n\leq n$. In general, the Kolmogorov $n$-width and the linear $n$-width are different approximation characterizations. However,
if $X$ is a Hilbert space, then 
$\lambda_n(F,X) = d_n(F,X).$
In what follows, for a normed space $X$ of functions on $\Omega$, the boldface $\bX$ denotes the unit ball in $X$.

There is a large number of works devoted to the problem of (unweighted) linear hyperbolic cross approximations of functions having a mixed smoothness on a compact domain, and their optimalities in of terms Kolmogorov and linear $n$-widths,  see  for survey and bibliography in \cite{DTU18B}, \cite{NoWo10}, \cite{Tem18B}. Here by linear hyperbolic cross approximation we understand approximation of  multivariate periodic functions by trigonometric polynomials with frequencies from so-called hyperbolic crosses, or their counterpart for  multivariate non-periodic functions.

The    weighted polynomial approximation is a classical branch of approximation theory. There is a huge body of works on different aspects of  the  univariate weighted polynomial approximation. We refer the reader to the books \cite{Mha1996B}, \cite{Lu07B},  \cite{JMN2021} for relevant results  and bibliography.
In the recent paper \cite{DK2022}, we have studied the linear approximation of functions from $\Wap(\RRd; \gamma)$ with the error measured in $L_q(\RRd;\gamma)$ for $1 \le q \le p \le \infty$. In particular, we proved in the last paper  the right convergence rate 
\begin{align} \label{lambda_n-gamma}
	\lambda_n(\bW^r_2(\RRd; \gamma), L_2(\RRd;\gamma)) 
	=
	d_n(\bW^r_2(\RRd; \gamma), L_2(\RRd;\gamma)) 
	\asymp 
	n^{- r/2} (\log n)^{r(d-1)/2}.
\end{align}
In  the present paper, we  investigate linear hyperbolic cross polynomial approximation of functions  with a mixed smoothness on  $\RRd$. Functions to be approximated are in weighted Sobolev spaces $W^r_{p,w}(\RRd)$. The approximation error is measured by the norm of the weighted Lebesgue  spaces $L_{q,w}(\RRd)$. The values of $p,q$ may vary  satisfying the inequalities $1 \le p,q \le\infty$. 
The results on this approximation will imply upper bounds of  the high dimensional   linear $n$-widths  $\lambda_n\big(\bW^r_{p,w}(\RRd), L_{q,w}(\RRd)\big)$ and Kolmogorov $n$-widths  $d_n\big(\bW^r_{p,w}(\RRd), L_{q,w}(\RRd)\big)$ ($d \ge 2$), the right convergence rate of these  linear $n$-widths  in one-dimensional case ($d=1$).
 We also study   the right convergence rate  of 
 $\lambda_n\big(\bW^r_{2,w}(\RRd), L_{2,w}(\RRd)\big)$ and  $d_n\big(\bW^r_{2,w}(\RRd), L_{2,w}(\RRd)\big)$ for particular weights $w$.
 
We briefly describe the main results of the present paper.  Throughout the present paper, for given $1 \le p, q \le \infty$ and the parameter $\lambda > 1$ in the definition \eqref{w(x)} of the generating weight $w$,  we make use of  the notations
\begin{equation}\label{r_lambda}
r_\lambda:= (1 - 1/\lambda)r;
\end{equation}
\begin{equation}\nonumber
	\delta_{\lambda,p,q} 
	:=
	\begin{cases}
		(1-1/\lambda)(1/p - 1/q)&  \ \ \text{if} \ \ p \le q, \\
		(1/\lambda)(1/q - 1/p)&  \ \ \text{if} \ \ p > q;
	\end{cases}	 
\end{equation}
 and 
$$
r_{\lambda,p,q} := r_\lambda - 	\delta_{\lambda,p,q}.
$$ 

We also use  the abbreviations:
	$$
	d_n:= d_n(\bW^r_{p,w}(\RRd),  L_{q,w}(\RRd)), \ \ \lambda_n:= \lambda_n(\bW^r_{p,w}(\RRd),  L_{q,w}(\RRd)).
	$$
	Let $1\le p, q \le \infty$, $r_{\lambda,p,q} >0$ and $\Vv_{\xi}$ be the de la Vall\'ee Poussin hyperbolic cross sum operator (see \eqref{de la Valle Poussin Vv} for the definition).	Then we prove that 
	for $\xi >1$
	\begin{equation}\nonumber
		\big\|f - \Vv_\xi f\big\|_{L_{q,w}(\RRd)}  
		\ll 
		\|f\|_{W^r_{p,w}(\RRd)}
		\begin{cases}
			2^{- r_\lambda \xi} \xi^{d - 1}  &  \ \ \text{if} \ \ p = q, \\
			2^{- r_{\lambda,p,q}\xi} \xi^{(d - 1)/q} &  \ \ \text{if} \ \ p \not= q< \infty, \\
			2^{- r_{\lambda,p,q}\xi} \xi^{(d - 1)} &  \ \ \text{if} \ \  q=\infty,
		\end{cases}		
		\ \ \xi > 1, \  f \in W^r_{p,w}(\RRd). 
	\end{equation}	
If $\xi_n$ is the largest number such that $\operatorname{rank } \brac{\Vv_{\xi_n}} \le n$ for $n \in \NN$, 
as a consequence, we have that for $n \ge 2$,
\begin{equation}\label{UpperB-introduction}
	d_n \le \lambda_n 
	\le 
	\sup_{f\in \bW^r_{p,w}(\RRd)} 		\big\|f - \Vv_{\xi_n} f\big\|_{L_{q,w}(\RRd)}  
	\ll 
	\begin{cases}
		n^{-r_{\lambda}} (\log n)^{(r_{\lambda} + 1)(d-1)} &  \ \ \text{if} \ \ p = q, \\
		n^{-r_{\lambda,p,q}} (\log n)^{(r_{\lambda,p,q} + 1/q)(d-1)} &  \ \ \text{if} \ \ p \not= q<\infty,
	\\
	n^{-r_{\lambda,p,q}} (\log n)^{(r_{\lambda,p,q} + 1)(d-1)} &  \ \ \text{if} \ q=\infty.
	\end{cases}	 
\end{equation}

In  the one-dimensional  case when $d=1$, for $1 \le q \le p < \infty$ we prove the right convergence rate
\begin{equation}\nonumber
	d_n
	\asymp 
	\lambda_n 
	\asymp 
			\sup_{f\in \bW^r_{p,w}(\RR)} 	\big\|f - V_n f\big\|_{L_{q,w}(\RRd)}  
	\asymp		
	n^{-r_{\lambda,p,q}},
\end{equation}
where $V_nf$ is the $n$th de la Vall\'ee Poussin  sum of  the orthonormal polynomial expansion of $f$ with respect to   the multivariate  weight $w^2$.
	
The	linear polynomial approximation method  $\Vv_{\xi_n}$ performing the upper bounds \eqref{UpperB-introduction} -- a counterpart of hyperbolic cross trigonometric  approximation method --  is  based on tensor product of  successive differences of dyadic-scaled de la Vall\'ee Poussin sums of  the orthonormal polynomial expansion of $f$ with respect to   the multivariate  weight $w^2$.

For   $\lambda =2, 4$, we prove the right convergence rate for $n \ge 2$,
\begin{align*}
	\lambda_n(\bW^r_{2,w}(\RRd), L_{2,w}(\RRd)) 
	=
	d_n(\bW^r_{2,w}(\RRd), L_{2,w}(\RRd)) 
	\asymp 
	n^{- r_\lambda} (\log n)^{r_\lambda(d-1)},
\end{align*}		
which is a generalization of \eqref{lambda_n-gamma}.

The paper is organized as follows. In Section  \ref{Approximation by de la Vallee Poussin sums}, we study linear polynomial approximations in the norm $L_{q,w}(\RR)$ of  univariate functions from $\bW^r_{p,w}(\RR)$ by de la Vall\'ee Poussin  and Fourier sums of  the orthonormal polynomial expansion of functions with respect to the univariate weight $w^2$. We give some upper bounds of the error of these approximations for $1\le p,q \le \infty$, and prove their asymptotic optimality in terms of  linear $n$-widths  $\lambda_n\big(\bW^r_{p,w}(\RR), L_{q,w}(\RR)\big)$ and Kolmogorov $n$-widths  $d_n\big(\bW^r_{p,w}(\RR), L_{q,w}(\RR)\big)$ for $1\le q \le p <\infty$. 
In Section \ref{Hyperbolic cross approximation}, we study linear approximations of multivariate  functions $f \in \bW^r_{p,w}(\RRd)$. 
We construct linear methods of hyperbolic cross polynomial approximation. 
We give some upper bounds of the error of these approximations  for various pair $p,q$ with $1 \le p, q \le \infty$. In Section \ref{Asymptotic order of n-widths}, for the particular weights $w$ with $\lambda = 2,4$, we prove the  right convergence rate of   $n$-widths  $\lambda_n\big(\bW^r_{2,w}(\RRd), L_{2,w}(\RRd)\big)$ and   $d_n\big(\bW^r_{2,w}(\RRd), L_{2,w}(\RRd)\big)$.

\medskip
\noindent
{\bf Notation.} 
 Denote   $\bx=:\brac{x_1,...,x_d}$ for $\bx \in \RRd$; 
$|\bx|_p:= \brac{\sum_{j=1}^d |x_j|^p}^{1/p}$ $(1 \le p < \infty)$ and $|\bx|_\infty:= \max_{1\le j \le d} |x_j|$.  For $\bx, \by \in \RRd$, the inequality $\bx \le \by$ ($\bx < \by$) means $x_i \le y_i$ ( $x_i < y_i$) for every $i=1,...,d$.  We use letters $C$  and $K$ to denote general 
positive constants which may take different values. For the quantities $A_n(f,\bk)$ and $B_n(f,\bk)$ depending on 
$n \in \NN$, $f \in W$, $\bk \in \ZZd$,  
we write  $A_n(f,\bk) \ll B_n(f,\bk)$, $f \in W$, $\bk \in \ZZd$ ($n \in \NN$ is specially dropped),  
if there exists some constant $C >0$ independent of $n,f,\bk$  such that 
$A_n(f,\bk) \le CB_n(f,\bk)$ for all $n \in \NN$,  $f \in W$, $\bk \in \ZZd$ (the notation $A_n(f,\bk) \gg B_n(f,\bk)$ has the obvious opposite meaning), and  
$A_n(f,\bk) \asymp B_n(f,\bk)$ if $A_n(f,\bk) \ll B_n(f,\bk)$
and $B_n(f,\bk) \ll A_n(f,\bk)$.  Denote by $|G|$ the cardinality of the set $G$. 
For a Banach space $X$, denote by the boldface $\bX$ the unit ball in $X$.

	\section{Approximation by de la Vall\'ee Poussin sums}
\label{Approximation by de la Vallee Poussin sums}

In this section, we study linear approximations of univariate functions $f \in W^r_{p,w}(\RR)$ by de la Vall\'ee Poussin and Fourier sums of  the orthonormal polynomial expansion with respect to the univariate weight $w^2$. The approximation error is measured in the norm of  $L_{q,w}(\RR)$. We give some upper bounds of the error of these approximations and prove their asymptotic optimality in terms of  linear $n$-widths  $\lambda_n\big(\bW^r_{p,w}(\RR), L_{q,w}(\RR)\big)$ and Kolmogorov $n$-widths  $d_n\big(\bW^r_{p,w}(\RR), L_{q,w}(\RR)\big)$ for $1\le q \le p <\infty$.

Let $(p_k)_{k\in \NN_0}$ be the sequence of orthonormal polynomials with respect to the univariate weight 
\begin{equation} \label{w^2}
w^2(x)=\exp \brac{- 2a|x|^\lambda + 2b}. 
\end{equation}

The polynomials $\brac{p_k}_{k \in \NN_0}$ constitute an orthonormal basis of the Hilbert space $L_{2,w}(\RR)$, and 
every $f \in L_{2,w}(\RR)$ can be represented by the polynomial series 
\begin{equation}\nonumber
	f = \sum_{k \in \NN_0} \hat{f}(k) p_k \ \ {\rm with} \ \ \hat{f}(k) := \int_{\RR} f(x)\, p_k(x)w^2(x) \rd x
\end{equation}
converging in the norm of $L_{2,w}(\RR)$. Moreover,  there holds  Parseval's identity
\begin{equation}\nonumber
	\norm{f}{L_{2,w}(\RR)}^2= \sum_{k \in \NN_0} |\hat{f}(k)|^2.
\end{equation}

Since every polynomial belongs to the space $L_{q,w}(\RR)$ with $1\le q \le \infty$, we can define for any  $k \in \NN_0$, $m \in \NN$ and $f \in L_{p,w}(\RR)$
the $k$th Fourier coefficient
\begin{equation}\nonumber
\hat{f}(k) := \int_{\RR} f(x)\, p_k(x)w^2(x) \rd x;
\end{equation}
the $m$th Fourier sum
\begin{equation} \nonumber
	S_m f := \sum_{k = 0}^{m-1} \hat{f}(k) p_k; 
\end{equation} 
and the $m$th de la Vall\'ee Poussin sum
\begin{equation} \nonumber
	V_m := \frac{1}{m}\sum_{k = m+1}^{2m} S_k.
\end{equation} 

Let $\Pp_m$ denote the space of polynomials of degree at most $m$. From the definition we have the following properties of the operator $V_m$ for $1 \le p \le \infty$ and $m \in \NN$,
\begin{equation} \nonumber
	V_m f  \in \Pp_{2m-1}, \ \ f  \in L_{p,w}(\RR),
\end{equation} 
and
\begin{equation} \nonumber
	V_m \varphi = \varphi, \ \  \varphi \in \Pp_m.
\end{equation} 


 For $m \in \NN$, let $q_m$ be the Freud number defined by
\begin{equation}\nonumber
	q_m:= (m/a\lambda)^{1/\lambda} \asymp m^{1/\lambda},
\end{equation}
and $a_m$  the Mhaskar-Rakhmanov-Saff number defined by
\begin{equation}\nonumber
	a_m :=(\nu_\lambda m)^{1/\lambda} \asymp m^{1/\lambda}, \ \ \nu_\lambda:= \frac{2^{\lambda - 1} \Gamma(\lambda/2)^2}{\Gamma(\lambda)},
\end{equation}
and $\Gamma$ is the gamma function. From the definitions one can see that
\begin{equation}\label{q_m asymp a_m}
	q_m \asymp a_m \asymp m^{1/\lambda}.
\end{equation}
The numbers $q_m$ and $a_m$ are relevant to convergence rates of weighted polynomial approximation (see, e.g., \cite{Mha1996B,Lu07B}).

For $1 \le p \le \infty$ and $f \in L_{p,w}(\RR)$, we define 	
\begin{equation}\nonumber
	E_m(f)_{p,w}:= \inf_{\varphi \in \Pp_m}	\|f - \varphi\|_{L_{p,w}(\RR)}  
\end{equation}
as the quantity of best approximation of $f$ by polynomials of degree at most $m$.
Then there holds the inequalities  for $1 \le p \le \infty$ 
\cite[Proposition 4.1.2, Lemma 4.1.5]{Mha1996B}
\begin{equation}\label{|V_m f|<}
	\|V_m f\|_{L_{p,w}(\RR)}
	\ll
		\|f\|_{L_{p,w}(\RR)}, \ \  f \in L_{p,w}(\RR), 
\end{equation}
\begin{equation}\label{E_{2m}(f)_{p,w}}
	E_{2m}(f)_{p,w} 
	\le 
	\|f  - V_m f\|_{L_{p,w}(\RR)}
	\ll
	E_m(f)_{p,w}, \ \  f \in L_{p,w}(\RR),  
\end{equation}
and \cite[Theorem 4.1.1]{Mha1996B} taking account \eqref{q_m asymp a_m}
\begin{equation}\label{	E_m(f)_{p,w}<}
	E_m(f)_{p,w} 
	\le 
	m^{-r_\lambda}  \|f \|_{W^r_{p,w}(\RR)}, \ \  f \in W^r_{p,w}(\RR).  
\end{equation}
From  \eqref{q_m asymp a_m}, \eqref{E_{2m}(f)_{p,w}} and \eqref{	E_m(f)_{p,w}<} it follows that
if $1 \le p \le \infty$,	then  for every $m \in \NN$,
	\begin{equation}\label{|f - I_m f|<}
		\|f - V_m f\|_{L_{p,w}(\RR)} 
		\le
		C m^{- r_\lambda} \|f \|_{W^r_{p,w}(\RR)}, \ \ f \in W^r_{p,w}(\RR).  
	\end{equation}

For the operators $S_m$, we have  \cite{JL1995}
\begin{equation}\label{|S_m f|<}
	\|S_m f\|_{L_{p,w}(\RR)}
	\ll
	\|f\|_{L_{p,w}(\RR)}, \ \  f \in L_{p,w}(\RR), \ \ \text{if  and  only if} \ \ 4/3 < p < 4,
\end{equation}
or, equivalently,
\begin{equation}\label{f - S_m f}
	\|f - S_m f\|_{L_{p,w}(\RR)}
	\ll
	E_m(f)_{p,w}, \ \  f \in L_{p,w}(\RR), \ \ \text{if  and  only  if} \ \ 4/3 < p < 4.
\end{equation}

For proofs of the following  lemmata  see \cite[ Theorem 3.4.2,  Theorem 4.2.4]{Mha1996B}, providing \eqref{q_m asymp a_m}.
\begin{lemma} \label{lemma:B-NInequality}
	Let $1 \le  p,q \le \infty$. Then we have the following.
\begin{itemize}
	\item[\rm{(i)}] 
	There holds the Markov-Bernstein-type inequality  
	\begin{equation}\nonumber
		\|\varphi'\|_{L_{p,w}(\RR)} 
		\ll
		m^{1- 1/\lambda}\|\varphi\|_{L_{p,w}(\RR)}	 	 
		\ \ \ \
		\forall  \varphi \in \Pp_m, \ \ \forall m \in \NN.  
	\end{equation}
		\item[\rm{(ii)}] For $1 \le p < q \le \infty$, there holds the Nikol'skii-type inequality
			\begin{equation}\nonumber
			\|\varphi\|_{L_{q,w}(\RR)} 
			\ll
			m^{(1-1/\lambda)(1/p - 1/q)}\|\varphi\|_{L_{p,w}(\RR)}	 	 
			\ \ \ \
			\forall  \varphi \in \Pp_m, \ \ \forall m \in \NN.  
		\end{equation}		
	\item[\rm{(iii)}] For $1 \le q < p \le \infty$, there holds the Nikol'skii-type inequality
\begin{equation}\nonumber
	\|\varphi\|_{L_{q,w}(\RR)} 
	\ll
	m^{(1/\lambda)(1/q - 1/p)}\|\varphi\|_{L_{p,w}(\RR)},	 	 
	 \ \
	  \forall \varphi \in \Pp_m, \  \forall m \in \NN.  
\end{equation}		
\end{itemize}	
\end{lemma}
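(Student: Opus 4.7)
The plan is to reduce all three inequalities to a common tool, the Mhaskar--Rakhmanov--Saff infinite-finite range principle for Freud weights, which asserts that for every $\varphi \in \Pp_m$ and $1 \le p \le \infty$ one has
$$
\|\varphi\|_{L_{p,w}(\RR)} \asymp \|\varphi\|_{L_{p,w}([-a_m, a_m])}.
$$
This localizes the problem to an interval of length $\asymp a_m \asymp m^{1/\lambda}$; after this step, each of (i)--(iii) is accessed by a rescaling $x = a_m y$ that maps $[-a_m, a_m]$ onto the fixed reference interval $[-1, 1]$.

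For part (iii), where $1 \le q < p$, the argument is essentially H\"older. Once localized to $[-a_m, a_m]$, H\"older's inequality gives
$$
\|\varphi w\|_{L_q([-a_m,a_m])} \le (2 a_m)^{1/q - 1/p}\, \|\varphi w\|_{L_p([-a_m,a_m])},
$$
and the Jacobian factor $(2 a_m)^{1/q - 1/p}$ yields precisely the claimed bound $m^{(1/\lambda)(1/q - 1/p)}$. For part (ii), where $p < q$, H\"older runs the wrong way; instead I would rescale to $[-1, 1]$ and apply a weighted Nikol'skii-type inequality for algebraic polynomials of degree $m$, which produces a factor $m^{1/p - 1/q}$. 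Undoing the rescaling contributes the Jacobian $a_m^{1/q - 1/p} \asymp m^{-(1/p - 1/q)/\lambda}$, and multiplying gives exactly $m^{(1 - 1/\lambda)(1/p - 1/q)}$. Part (i) is handled similarly: differentiation under $x = a_m y$ produces $a_m^{-1} \asymp m^{-1/\lambda}$, while a Bernstein-type estimate on $[-1,1]$ for polynomials of degree $m$ against the transported weight contributes a factor $m$, yielding $m^{1 - 1/\lambda}$.

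The main technical obstacle is that when $w$ is pulled back to $[-1,1]$ under $x = a_m y$ the transported weight behaves like $\exp(-a \nu_\lambda m |y|^\lambda)$, which is strongly $m$-dependent and concentrated near $y = 0$, so one cannot simply quote the unweighted classical Markov--Bernstein or Nikol'skii bounds on $[-1,1]$. A refined analysis based on the equilibrium density $\sim 1/\sqrt{a_m^2 - x^2}$ of the weighted potential problem is needed to confirm that the correct Bernstein-scale oscillation length on $[-a_m, a_m]$ is $a_m/m$, which is what ultimately drives the factor $m^{1 - 1/\lambda}$ in (i) and the related Nikol'skii exponent in (ii). Rather than reconstruct this machinery, I would cite Theorems 3.4.2 and 4.2.4 of Mhaskar's monograph, where this program is carried out in detail.
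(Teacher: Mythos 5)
Your proposal is correct and ultimately rests on the same source as the paper: the paper's entire proof of this lemma is the citation of Theorems 3.4.2 and 4.2.4 of Mhaskar's monograph together with the equivalence $q_m \asymp a_m \asymp m^{1/\lambda}$, which is exactly where you end up. Your preliminary sketch via the infinite--finite range inequality and rescaling is a sound heuristic (and part (iii) is in fact a complete elementary argument once the range inequality is granted), and you correctly flag that the naive unweighted Markov--Bernstein and Nikol'skii bounds on $[-1,1]$ do not apply directly, so deferring to Mhaskar for the refined analysis is appropriate.
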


We define  the one-dimensional operators for  $m \in \NN$ and $k \in \NN_0$
\begin{equation} \label{v_k}
	v_{m,k}:=  V_{m2^k} - V_{m2^{k-1}}, \  k >0, \ \ v_{m,0}:= V_m,
\end{equation}
and
\begin{equation}\nonumber
	s_{m,k}:=  S_{m2^k} - S_{m2^{k-1}}, \  k >0, \ \ s_{m,0}:= S_m. 
\end{equation}
We also use the abbreviations: $v_k := v_{1,k}$ and $s_k := s_{1,k}$.
\begin{lemma} \label{lemma:v_k}
	Let  $1\le p, q \le \infty$, $r_{\lambda,p,q} >0$ and $m \in \NN$.   Then we have that
	for every $ f \in W^r_{p,w}(\RR)$, there hold the series representation
	\begin{equation}\label{Series2}
		f
		= 	\sum_{k \in \NN_0}v_{m,k} f 
	\end{equation}
	with absolute convergence in the space $L_{q,w}(\RR)$  of the series, and the norm estimates 
	\begin{equation}\label{Delta_k^If}
		\big\|v_{m,k} f\big\|_{L_{q,w}(\RR)}  
		\ll (m2^k)^{-  r_{\lambda,p,q}} \|f\|_{W^r_{p,w}(\RR)},  \ \ f \in W^r_{p,w}(\RR),  \ \ m \in \NN, 
		\   k \in \NN_0.
	\end{equation}
\end{lemma}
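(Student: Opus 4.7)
The plan is to first prove the $L_{q,w}$-norm estimate \eqref{Delta_k^If} for each block $v_{m,k} f$, and then deduce the series representation \eqref{Series2} by telescoping and identifying the limit.

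For \eqref{Delta_k^If}, I would first establish an intermediate $L_{p,w}$-estimate
\begin{equation*}
	\|v_{m,k} f\|_{L_{p,w}(\RR)} \ll (m2^k)^{-r_\lambda} \|f\|_{W^r_{p,w}(\RR)}.
\end{equation*}
For $k \ge 1$ this follows from the decomposition $v_{m,k} f = (V_{m2^k} f - f) - (V_{m2^{k-1}} f - f)$, combined with the Jackson-type bound \eqref{|f - I_m f|<} applied at the two comparable scales $m2^k$ and $m2^{k-1}$. For $k=0$ we have $v_{m,0} f = V_m f$; the boundedness \eqref{|V_m f|<} together with $\|f\|_{L_{p,w}(\RR)} \le \|f\|_{W^r_{p,w}(\RR)}$ gives the required bound, the factor $m^{-r_\lambda}$ being absorbed into the implicit constant (admissible since $m$ is a fixed parameter of the lemma). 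Next, since $v_{m,k} f \in \Pp_{m2^{k+1}-1}$, I apply the Nikol'skii-type inequalities of Lemma \ref{lemma:B-NInequality}(ii)--(iii) to transfer from $L_{p,w}(\RR)$ to $L_{q,w}(\RR)$, picking up an extra factor of $(m2^k)^{\delta_{\lambda,p,q}}$ (with $\delta_{\lambda,p,q}=0$ when $p=q$). Combining these two inputs yields the exponent $-r_\lambda + \delta_{\lambda,p,q} = -r_{\lambda,p,q}$, which is precisely \eqref{Delta_k^If}.

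For the series \eqref{Series2}, the telescoping identity $\sum_{k=0}^N v_{m,k} f = V_{m2^N} f$ is immediate from \eqref{v_k}. Since $r_{\lambda,p,q}>0$, the estimate just proved yields $\sum_{k=0}^\infty \|v_{m,k} f\|_{L_{q,w}(\RR)} < \infty$, so the partial sums form a Cauchy sequence in $L_{q,w}(\RR)$ with an absolute limit $g$. To show $g=f$ in $L_{q,w}(\RR)$, I would use the convergence $V_{m2^N} f \to f$ in $L_{p,w}(\RR)$ supplied by \eqref{|f - I_m f|<}: passing to a common subsequence, $V_{m2^N} f$ converges almost everywhere to $f$ (from $L_{p,w}$-convergence) and also almost everywhere to $g$ (from $L_{q,w}$-convergence); since $w>0$ everywhere on $\RR$, these a.e.\ limits must coincide, giving $f=g$ a.e.\ and hence $f=g$ as elements of $L_{q,w}(\RR)$.

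The main obstacle is largely bookkeeping: one must split the Nikol'skii step into the cases $p<q$, $p=q$, and $p>q$ and verify that the two forms of $\delta_{\lambda,p,q}$ combine with $-r_\lambda$ to yield exactly $-r_{\lambda,p,q}$ in each case. A minor subtlety is the $k=0$ term, which is not a telescoping difference and so must be treated separately (with implicit constants absorbing $m$-dependence); once this is acknowledged, the bound \eqref{Delta_k^If} holds uniformly in $k$, and the identification of the series limit requires only standard a.e.\ subsequence extraction rather than an a priori embedding $W^r_{p,w}(\RR) \hookrightarrow L_{q,w}(\RR)$ (which in fact emerges as a byproduct of the proof).
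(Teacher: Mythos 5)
Your proof follows essentially the same route as the paper's: the $L_{p,w}$ bound on $v_{m,k}f$ via the triangle inequality and the Jackson-type estimate \eqref{|f - I_m f|<}, the transfer to $L_{q,w}$ via the Nikol'skii inequalities of Lemma~\ref{lemma:B-NInequality}, and the telescoping identity $\sum_{s\le k} v_{m,s} = V_{m2^k}$ for the series representation. If anything you are more careful than the paper on two points it glosses over: the $k=0$ term (where the claimed $(m2^k)^{-r_{\lambda,p,q}}$ decay cannot hold uniformly in $m$ --- a nonzero constant $f$ gives a counterexample --- so your reading of $m$ as fixed is the only tenable one) and the identification of the $L_{q,w}$-limit of the partial sums with $f$ when $q\neq p$, where your a.e.\ subsequence argument supplies a justification the paper omits.
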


\begin{proof}
Let 	$ f \in W^r_{p,w}(\RR)$.
Since $v_{m,k} f \in \Pp_{m2^{k +1} -1}$ by the claims (iii) and (iv) of  Lemma~\ref{lemma:B-NInequality} we have that
		\begin{equation}\label{Delta_k^If<<}
		\big\|v_{m,k} f\big\|_{L_{q,w}(\RR)}  
		\ll (m2^k)^{\delta_{\lambda,p,q}} 	\big\|v_{m,k} f\big\|_{L_{p,w}(\RR)},
		\ \ m \in \NN, \   k \in \NN_0.
	\end{equation}	
	By Lemma \ref{lemma:v_k} we have that for every $f \in W^r_{p,w}(\RR)$ and $k \in \NN_0$,
	\begin{equation}\nonumber
		\begin{aligned}
			\big\|v_{m,k} f\big\|_{L_{p,w}(\RR)} 
			& \le  \big\|f  - V_{m2^k}\big\|_{L_{p,w}(\RR)}  + \big\|f  - V_{m2^{k-1}}\big\|_{L_{p,w}(\RR)} 
			\\
			&\ll 
			  (m2^k)^{- r_\lambda k} \|f\|_{W^r_{p,w}(\RR)},
		\end{aligned}
	\end{equation}
	which together with \eqref{Delta_k^If<<} proves \eqref{Delta_k^If} and hence  the absolute convergence of the series in \eqref{Series2} follows. 
	The equality in \eqref{Series2} is implied from \eqref{|f - I_m f|<}  and the equality 
	\begin{equation}\nonumber
		V_{m2^k}
		= 	\sum_{s \le k} v_{m,s}.
	\end{equation}	
	\hfill	
\end{proof}	

\begin{theorem} \label{thm: V_n}
	Let $1\le p,q \le \infty$ and $r_{\lambda,p,q} >0$.     Then we have
		\begin{equation}\nonumber
				\sup_{f\in \bW^r_{p,w}(\RR)} \big\|f - V_n f\big\|_{ L_{q,w}(\RR)} 
			\ll
			n^{- r_{\lambda,p,q}}.
		\end{equation}
\end{theorem}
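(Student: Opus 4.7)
The plan is to deduce this theorem directly from Lemma \ref{lemma:v_k} with the choice $m = n$, which supplies both a dyadic series representation of $f$ in $L_{q,w}(\RR)$ and a sharp estimate of each dyadic block. Indeed, setting $m=n$ in \eqref{Series2} and separating off the $k=0$ term (which is exactly $V_n f$, since $v_{n,0} = V_n$ by \eqref{v_k}) gives the identity
$$
f - V_n f = \sum_{k=1}^{\infty} v_{n,k} f,
$$
with absolute convergence in $L_{q,w}(\RR)$ provided by Lemma \ref{lemma:v_k}.

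Next, I would apply the triangle inequality together with the block bound \eqref{Delta_k^If}, namely $\|v_{n,k} f\|_{L_{q,w}(\RR)} \ll (n2^k)^{-r_{\lambda,p,q}} \|f\|_{W^r_{p,w}(\RR)}$, to obtain
$$
\bigl\|f - V_n f\bigr\|_{L_{q,w}(\RR)}
\ll
\|f\|_{W^r_{p,w}(\RR)} \sum_{k=1}^{\infty} (n2^k)^{-r_{\lambda,p,q}}
=
\|f\|_{W^r_{p,w}(\RR)}\, n^{-r_{\lambda,p,q}} \sum_{k=1}^{\infty} 2^{-k r_{\lambda,p,q}}.
$$
The hypothesis $r_{\lambda,p,q}>0$ guarantees that the geometric tail $\sum_{k \ge 1} 2^{-k r_{\lambda,p,q}}$ sums to a constant depending only on $\lambda,p,q,r$. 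Taking the supremum over $f \in \bW^r_{p,w}(\RR)$ then delivers the claimed bound $n^{-r_{\lambda,p,q}}$.

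There is, candidly, no genuine obstacle at this stage: the Jackson-type estimate \eqref{|f - I_m f|<}, the Markov--Bernstein and Nikol'ski\u{i}-type inequalities from Lemma \ref{lemma:B-NInequality}, and their combination into the dyadic block bound have all been folded into Lemma \ref{lemma:v_k}. The substantive step, and the one where the positivity condition $r_{\lambda,p,q}>0$ truly enters as a convergence requirement, is the proof of that lemma; what remains here is essentially a single geometric summation of the dyadic tail, which is the natural way to cash in the lemma. The only small point of hygiene is to ensure that the absolute convergence in $L_{q,w}(\RR)$ asserted in Lemma \ref{lemma:v_k} legitimately justifies the interchange of norm and infinite sum in the first inequality above, but this is immediate from the triangle inequality applied to the partial sums together with the summability of the bound already established.
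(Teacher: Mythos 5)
Your proposal is correct and coincides with the paper's own argument: the paper likewise writes $f - V_n f = \sum_{k \in \NN} v_{n,k} f$ via Lemma \ref{lemma:v_k} with $m=n$, applies the triangle inequality and the block bound \eqref{Delta_k^If}, and sums the resulting geometric series using $r_{\lambda,p,q}>0$. Nothing further is needed.
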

\begin{proof}
By using Lemma \ref{lemma:v_k} we derive  for every $f\in \bW^r_{p,w}(\RR)$ and $n \in \NN_0$,
\begin{equation}\label{}
	\begin{split}\nonumber
 \big\|f - V_n f\big\|_{ L_{q,w}(\RR)} 
		&= 
		\Bigg\|\sum_{k \in \NN} v_{n,k} f\Bigg\|_{ L_{q,w}(\RR)} \\
		&\le 
		\sum_{k \in \NN} \big\|v_{n,k} f\big\|_{ L_{q,w}(\RR)}
		\ll
		\sum_{k \in \NN}  (n2^k)^{-  r_{\lambda,p,q}}  \|f\|_{W^r_{p,w}(\RRd)} \\
		&\le
		n^{-  r_{\lambda,p,q}} \sum_{k \in \NN}  2^{-  r_{\lambda,p,q} k}
		\asymp
		n^{- r_{\lambda,p,q}}. 
	\end{split}
\end{equation}
	\hfill
\end{proof}
\begin{corollary} \label{cor: d_n}
	Let  $1\le q \le p < \infty$ and $r_{\lambda,p,q} >0$.     Then we have
	\begin{equation}\label{d_n}
\lambda_n\big(\bW^r_{p,w}(\RR),  L_{q,w}(\RR)\big) 	
\asymp
d_n\big(\bW^r_{p,w}(\RR),  L_{q,w}(\RR)\big) 	
\asymp				
		n^{- r_{\lambda,p,q}}.
	\end{equation}
\end{corollary}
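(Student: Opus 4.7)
The upper bound follows immediately from Theorem \ref{thm: V_n}. Since $V_n$ is linear with range contained in $\Pp_{2n-1}$, we have $\operatorname{rank} V_n \le 2n$, so
\[
\lambda_{2n}\bigl(\bW^r_{p,w}(\RR), L_{q,w}(\RR)\bigr) \le \sup_{f \in \bW^r_{p,w}(\RR)} \|f - V_n f\|_{L_{q,w}(\RR)} \ll n^{-r_{\lambda,p,q}},
\]
and a trivial reindexing, together with $d_n \le \lambda_n$, yields the upper bounds for both widths.

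For the matching lower bound my plan is to use the Bernstein $n$-width
\[
b_n(F,X) := \sup_{L_{n+1}}\sup\bigl\{\rho \ge 0 : \rho\,\bX \cap L_{n+1} \subset F\bigr\}, \qquad b_n \le d_n \le \lambda_n,
\]
and to construct, for each $n$, a suitable $(n+1)$-dimensional polynomial subspace $L_{n+1}$ on which the two balls match at the correct scale. The key mechanism is the Markov-Bernstein inequality of Lemma \ref{lemma:B-NInequality}(i), which converts the Sobolev condition to a pure $L_{p,w}(\RR)$-condition on polynomials of degree $\le N$: every $P \in \Pp_N$ satisfies $\|P\|_{W^r_{p,w}(\RR)} \ll N^{r_\lambda}\|P\|_{L_{p,w}(\RR)}$, so $\bW^r_{p,w}(\RR) \cap L_{n+1}$ contains an $L_{p,w}(\RR)$-ball of radius $\asymp N^{-r_\lambda}$ whenever $L_{n+1} \subset \Pp_N$ with $N \asymp n$.

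In the diagonal case $p = q$ we have $\delta_{\lambda,p,q} = 0$ and $r_{\lambda,p,q} = r_\lambda$, and the choice $L_{n+1} = \Pp_n$ immediately yields $b_n \gg n^{-r_\lambda} = n^{-r_{\lambda,p,p}}$. In the off-diagonal case $1 \le q < p < \infty$, reaching the sharp exponent $r_{\lambda,p,q} = r_\lambda - \delta_{\lambda,p,q}$ requires an inverse Nikolskii-type estimate $\|P\|_{L_{p,w}(\RR)} \ll N^{-\delta_{\lambda,p,q}}\|P\|_{L_{q,w}(\RR)}$ uniformly on a well-chosen subspace, and this will be the main technical obstacle: note that the forward Nikolskii inequality of Lemma \ref{lemma:B-NInequality}(iii) goes the wrong way, so the subspace cannot be taken to be all of $\Pp_n$. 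My plan is to adapt the discretization strategy of \cite{DK2022}: via a Marcinkiewicz-Zygmund-type equivalence at the zeros of $p_{2N}$, identify the restrictions of $\|\cdot\|_{L_{p,w}(\RR)}$ and $\|\cdot\|_{L_{q,w}(\RR)}$ to a polynomial subspace with weighted $\ell^p$- and $\ell^q$-norms on $\asymp N$ quadrature nodes, whose Christoffel weights follow the Freud profile $\lambda_j \asymp (a_N/N)\,w(x_j)^2 \asymp N^{1/\lambda - 1}\,w(x_j)^2$ in the bulk. I then plan to select a subspace spanned by fundamental polynomials supported on a carefully chosen subset of these nodes so that the finite-dimensional $\ell^p$-versus-$\ell^q$ comparison produces precisely the factor $N^{\delta_{\lambda,p,q}}$ needed to close the gap, combined with the Markov-Bernstein bound to deliver the target lower bound $b_n \gg n^{-r_{\lambda,p,q}}$. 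Combining this with the upper bound from Theorem \ref{thm: V_n} then completes the asymptotic identification \eqref{d_n} and confirms the order-optimality of $V_n$ in the range $1 \le q \le p < \infty$.
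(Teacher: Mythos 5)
Your upper bound is correct and is exactly the paper's argument: Theorem \ref{thm: V_n} together with $\operatorname{rank}\, V_n\le 2n$ and a reindexing. Be aware, though, that the paper does not prove the lower bound either --- it simply cites \cite{DD2024}, inequality (2.32) --- so the real content of your proposal is the attempted lower bound. The diagonal case $p=q$ of your Bernstein-width argument is sound: taking $L_{n+1}=\Pp_n$, iterating the Markov--Bernstein inequality of Lemma \ref{lemma:B-NInequality}(i) to get $\|P\|_{W^r_{p,w}(\RR)}\ll n^{r_\lambda}\|P\|_{L_{p,w}(\RR)}$, and invoking Lemma \ref{lemma: Tikhomirov} gives $d_n\gg n^{-r_\lambda}=n^{-r_{\lambda,p,p}}$.

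For $1\le q<p<\infty$ there is a genuine gap, and the specific construction you sketch cannot work. A subspace spanned by fundamental polynomials attached to a subset of Marcinkiewicz--Zygmund nodes is, after discretization, a coordinate subspace of $\RR^M$ with $M\asymp N$, and a single fundamental polynomial $\ell_j$ (localized on an interval of length $\asymp a_N/N\asymp N^{1/\lambda-1}$) has $\|\ell_j\|_{L_{p,w}(\RR)}/\|\ell_j\|_{L_{q,w}(\RR)}\asymp N^{(1-1/\lambda)(1/q-1/p)}$, which exceeds the required bound $N^{-\delta_{\lambda,p,q}}=N^{-(1/\lambda)(1/q-1/p)}$ by exactly the factor $N^{1/q-1/p}$ you are trying to gain; any subspace containing near-coordinate vectors fails in the same way. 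What your plan actually demands is a Kashin-type subspace of $\Pp_N$ of dimension $\asymp N$ on which H\"older's inequality $\|y\|_{\ell_q^M}\le M^{1/q-1/p}\|y\|_{\ell_p^M}$ is uniformly reversed; such subspaces of $\RR^M$ exist only in certain ranges of $(p,q)$, are never coordinate subspaces, and realizing one inside $\Pp_N$ is a substantial problem in its own right, not an adaptation of the discretization in \cite{DK2022}. The robust route to the off-diagonal lower bound avoids Bernstein widths altogether: embed the unit ball of $\ell_p^{2n}$ into $\bW^r_{p,w}(\RR)$ via $2n$ disjointly supported bumps $g_j$ of width $h\asymp n^{1/\lambda-1}$ placed in $[-a_n,a_n]$ and normalized by the local weight value, so that $\big\|\sum_j c_jg_j\big\|_{W^r_{p,w}(\RR)}\asymp h^{1/p-r}\|c\|_{\ell_p}$ and $\big\|\sum_j c_jg_j\big\|_{L_{q,w}(\RR)}\asymp h^{1/q}\|c\|_{\ell_q}$, and then invoke the exact order $n^{1/q-1/p}$ of the Kolmogorov $n$-width of the $\ell_p^{2n}$-ball in $\ell_q^{2n}$ for $q\le p$ (Stesin's theorem); the exponents combine to $h^{\,r+1/q-1/p}\,n^{1/q-1/p}\asymp n^{-r_{\lambda,p,q}}$. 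Note that in the finite-dimensional model the Bernstein width of the $\ell_p^{2n}$-ball in $\ell_q^{2n}$ over coordinate subspaces is only $O(1)$ while the Kolmogorov width is $n^{1/q-1/p}$, which is precisely the gap your approach does not bridge. As written, your proposal proves the corollary only for $p=q$.
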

\begin{proof}
	The upper bound of \eqref{d_n} can be easily derived from Theorem \ref{thm: V_n}. The lower bound was proven in  \cite[(2.32)]{DD2024}.
	\hfill
\end{proof}

Similarly, from \eqref{f - S_m f} and \eqref{	E_m(f)_{p,w}<}  we deduce the following results for the approximation by Fourier sums.

\begin{lemma} \label{lemma:s_k}
	Let  $4/3 <  p < 4$, $1\le q \le \infty$, $r_{\lambda,p,q} >0$ and $m \in \NN$.   Then we have that
	for every $ f \in W^r_{p,w}(\RR)$, there hold the series representation
	\begin{equation}\nonumber
		f
		= 	\sum_{k \in \NN_0}s_{m,k} f 
	\end{equation}
	with absolute convergence in the space $L_{q,w}(\RR)$  of the series, and the norm estimates 
	\begin{equation}\label{Delta_k^If}
		\big\|s_{m,k} f\big\|_{L_{q,w}(\RR)}  
		\leq C (m2^k)^{-  r_{\lambda,p,q}} \|f\|_{W^r_{p,w}(\RR)}, \ \ m \in \NN, 
		\   k \in \NN_0.
	\end{equation}
\end{lemma}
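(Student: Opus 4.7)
My plan is to mirror the proof of Lemma~\ref{lemma:v_k}, substituting the Fourier partial sums $S_m$ for the de la Vall\'ee Poussin sums $V_m$. The restriction $4/3<p<4$ enters in exactly one place: it is what allows the best-approximation bound~\eqref{f - S_m f} for Fourier sums in $L_{p,w}(\RR)$ to replace the unconditional estimate~\eqref{|f - I_m f|<} used for $V_m$. Every other ingredient carries over unchanged.

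First I would establish the norm bound. Since $s_{m,k}f\in\Pp_{m2^{k+1}-1}$, the Nikol'skii-type inequalities of Lemma~\ref{lemma:B-NInequality}(ii)--(iii) (trivial when $p=q$) give
\begin{equation*}
\big\|s_{m,k}f\big\|_{L_{q,w}(\RR)}\ll (m2^k)^{\delta_{\lambda,p,q}}\big\|s_{m,k}f\big\|_{L_{p,w}(\RR)}.
\end{equation*}
For $k\ge 1$, writing $s_{m,k}f=(f-S_{m2^{k-1}}f)-(f-S_{m2^k}f)$ and combining the triangle inequality with~\eqref{f - S_m f} and the Jackson-type bound $E_m(f)_{p,w}\le m^{-r_\lambda}\|f\|_{W^r_{p,w}(\RR)}$ yields
\begin{equation*}
\big\|s_{m,k}f\big\|_{L_{p,w}(\RR)}\ll E_{m2^{k-1}}(f)_{p,w}\ll (m2^k)^{-r_\lambda}\|f\|_{W^r_{p,w}(\RR)};
\end{equation*}
the case $k=0$ is treated exactly as in Lemma~\ref{lemma:v_k}. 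Multiplying the two estimates produces the required~\eqref{Delta_k^If}.

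For the series identity, since $r_{\lambda,p,q}>0$ the bound just proved yields $\sum_{k\in\NN_0}\|s_{m,k}f\|_{L_{q,w}(\RR)}<\infty$, so $\sum_{k\in\NN_0}s_{m,k}f$ converges absolutely in $L_{q,w}(\RR)$ to some limit $g$. Its $N$-th partial sum is $S_{m2^N}f$, and from~\eqref{f - S_m f} together with the Jackson bound one has $\|f-S_{m2^N}f\|_{L_{p,w}(\RR)}\ll (m2^N)^{-r_\lambda}\|f\|_{W^r_{p,w}(\RR)}\to 0$. Convergence in both $L_{p,w}(\RR)$ and $L_{q,w}(\RR)$ produces subsequences converging almost everywhere (the weight is bounded above and below on each compact interval), so $g=f$ almost everywhere, giving the stated expansion.

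The only subtle point, and the sole reason for the range $4/3<p<4$, is the appeal to~\eqref{f - S_m f}, which is sharp in $p$; beyond that, the argument is a structural copy of the $V_m$-case and presents no real obstacle.
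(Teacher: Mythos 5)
Your proposal is correct and follows exactly the route the paper intends: the paper's own justification of Lemma~\ref{lemma:s_k} is the one-line remark that it follows "similarly" from \eqref{f - S_m f} and \eqref{	E_m(f)_{p,w}<} by repeating the argument of Lemma~\ref{lemma:v_k} with $S_m$ in place of $V_m$, which is precisely what you carry out. Your additional care in identifying the limit of the telescoping partial sums $S_{m2^N}f$ is a welcome detail the paper leaves implicit, and you correctly isolate the constraint $4/3<p<4$ as the only point where the Fourier-sum case differs.
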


\begin{theorem} \label{thm: S_n}
	Let $4/3 <  p < 4$, $1\le q \le \infty$ and $r_{\lambda,p,q} >0$.     Then we have
	\begin{equation}\nonumber
		\sup_{f\in \bW^r_{p,w}(\RR)} \big\|f - S_n f\big\|_{ L_{q,w}(\RR)} 
		\ll
		n^{- r_{\lambda,p,q}}.
	\end{equation}
\end{theorem}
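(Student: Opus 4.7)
The plan is to reduce the statement to the dyadic decomposition provided by Lemma~\ref{lemma:s_k}, in complete parallel with the proof of Theorem~\ref{thm: V_n}. Fix $f \in \bW^r_{p,w}(\RR)$. Since $4/3 < p < 4$ and $r_{\lambda,p,q} > 0$, Lemma~\ref{lemma:s_k} (with $m = n$) applies and yields the representation $f = \sum_{k \in \NN_0} s_{n,k} f$ with absolute convergence in $L_{q,w}(\RR)$, together with the dyadic-block estimate
\begin{equation*}
	\big\|s_{n,k} f\big\|_{L_{q,w}(\RR)} \ll (n 2^k)^{-r_{\lambda,p,q}} \|f\|_{W^r_{p,w}(\RR)},
	\quad k \in \NN_0.
\end{equation*}

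Since $s_{n,0} = S_n$ by definition, the representation separates as $S_n f + \sum_{k \ge 1} s_{n,k} f = f$, so that $f - S_n f = \sum_{k \ge 1} s_{n,k} f$ in $L_{q,w}(\RR)$. Then by the triangle inequality and the block estimate,
\begin{equation*}
\big\|f - S_n f\big\|_{L_{q,w}(\RR)}
\le \sum_{k \ge 1} \big\|s_{n,k} f\big\|_{L_{q,w}(\RR)}
\ll \|f\|_{W^r_{p,w}(\RR)}\, n^{-r_{\lambda,p,q}} \sum_{k \ge 1} 2^{-r_{\lambda,p,q}\, k}.
\end{equation*}
The geometric series on the right converges because $r_{\lambda,p,q} > 0$, producing the announced bound $\ll n^{-r_{\lambda,p,q}}$ uniformly for $f \in \bW^r_{p,w}(\RR)$.

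No serious obstacle is expected, since the heavy lifting is already done in Lemma~\ref{lemma:s_k}: the restriction $4/3 < p < 4$ enters precisely there, through \eqref{f - S_m f}, to guarantee $\|f - S_{m} f\|_{L_{p,w}(\RR)} \ll E_m(f)_{p,w}$, which combined with \eqref{	E_m(f)_{p,w}<} and the Nikol'skii-type inequality of Lemma~\ref{lemma:B-NInequality} (applied to $s_{n,k} f \in \Pp_{n2^{k+1}-1}$) yields the block estimate. Thus the proof is a verbatim transcription of the argument for Theorem~\ref{thm: V_n}, with $V$ replaced by $S$ and Lemma~\ref{lemma:v_k} replaced by Lemma~\ref{lemma:s_k}.
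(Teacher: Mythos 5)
Your proof is correct and follows exactly the route the paper intends: the paper gives no separate argument for Theorem~\ref{thm: S_n}, merely noting that it follows "similarly" from \eqref{f - S_m f} and \eqref{	E_m(f)_{p,w}<} via Lemma~\ref{lemma:s_k}, which is precisely the transcription of the proof of Theorem~\ref{thm: V_n} that you carry out. The decomposition $f - S_n f = \sum_{k \ge 1} s_{n,k} f$, the block estimate from Lemma~\ref{lemma:s_k}, and the convergent geometric series under $r_{\lambda,p,q} > 0$ are all as in the paper.
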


	\section{Hyperbolic cross polynomial approximation}
\label{Hyperbolic cross approximation}

In this section, we consider weighted  hyperbolic cross   linear polynomial approximations of multivariate  functions $f \in \bW^r_{p,w}(\RRd)$. 
The approximation error is measured in the norm of  $L_{q,w}(\RRd)$. We construct linear methods of polynomial approximation which are counterparts  of linear  hyperbolic cross trigonometric approximation for periodic multivariate functions. 
We give some upper bounds of the error of these approximations and  of  linear $n$-widths  $\lambda_n\big(\bW^r_{p,w}(\RRd), L_{q,w}(\RRd)\big)$ and Kolmogorov $n$-widths  $d_n\big(\bW^r_{p,w}(\RRd), L_{q,w}(\RRd)\big)$ for various pair $p,q$ with $1 \le p, q \le \infty$. For the weights $w$ with $\lambda = 2,4$, we establish the right convergence rate of   $n$-widths  $\lambda_n\big(\bW^r_{2,w}(\RRd), L_{2,w}(\RRd)\big)$ and   $d_n\big(\bW^r_{2,w}(\RRd), L_{2,w}(\RRd)\big)$.

Recall that  $(p_k)_{k \in \NN_0}$ is the sequence of orthonormal  polynomials with respect to the univariate  Freud-type weight $w^2$ as in \eqref{w^2}.
For every multi-index $\bk\in \NNd_0$, the $d$-variate 
polynomial $p_\bk$, we  define
\begin{equation*}\label{H_bk}
	p_\bk(\bx) :=\prod_{j=1}^d p_{k_j}(x_j),
	\ \ \bx\in \RRd.
\end{equation*}
The polynomials $\brac{p_\bk}_{\bk \in \NNd_0}$ constitute an orthonormal basis of the Hilbert space $L_{2,w}(\RRd)$, and 
every $f \in L_{2,w}(\RRd)$ can be represented by the polynomial series 
\begin{equation}\label{ONP-series}
	f = \sum_{\bk \in \NNd_0} \hat{f}(\bk) p_\bk \ \ {\rm with} \ \ \hat{f}(\bk) := \int_{\RRd} f(\bx)\, p_\bk(\bx)w(\bx) \rd \bx
\end{equation}
converging in the norm of $L_{2,w}(\RRd)$. Moreover,  there holds  Parseval's identity
\begin{equation}\label{P-id}
	\norm{f}{L_{2,w}(\RRd)}^2= \sum_{\bk \in \NNd_0} |\hat{f}(\bk)|^2.
\end{equation}

For $\bx \in \RRd$ and $e \subset \brab{1,...,d}$, let $\bx^e \in \RR^{|e|}$ be defined by $(x^e)_i := x_i$, and  $\bar{\bx}^e\in \RR^{d-|e|}$ by $(\bar{x}^e)_i := x_i$, $i \in \brab{1,...,d} \setminus e$. With an abuse we write 
$(\bx^e,\bar{\bx}^e) = \bx$.

For the proof of the following lemma, see \cite[Lemma 3.2]{DD2023}. 

\begin{lemma} \label{lemma:g(bx^e}
	Let $1\le p \le \infty$,  $e \subset \brab{1,...,d}$ and $\br \in \NNd_0$. Assume that $f$ is a function on $\RRd$  such that for every $\bk \le \br$, $D^\bk f \in L_{p,w}(\RRd)$. 
	Put for  $\bk \le \br$ and $\bar{\bx}^e \in \RR^{d-|e|}$,
	\begin{equation*}\label{g(bx^e}
	g(\bx^e): =  D^{\bar{\bk}^e} f(\bx^e,\bar{\bx}^e).
	\end{equation*}
Then $D^\bs g \in L_{p,w}(\RR^{|e|})$ for every $\bs \le \bk^{e}$ and almost every 
$\bar{\bx}^e \in \RR^{d-|e|}$.
\end{lemma}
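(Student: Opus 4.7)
The plan is to reduce the claim to Fubini's theorem combined with a Fubini-type identity for weak derivatives; the crucial enabling fact is that the weight factorises as $w(\bx)=w(\bx^e)\,w(\bar{\bx}^e)$ because $w$ is a tensor product of univariate factors.

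Fix $\bs \le \bk^e$ and assemble the multi-index $\bm \in \NNd_0$ whose components at positions in $e$ are those of $\bs$ and whose components at positions in the complement are those of $\bar{\bk}^e$. Then $\bm \le \bk \le \br$, so by hypothesis $D^\bm f \in L_{p,w}(\RRd)$. Using the factorisation of $w$, Fubini's theorem applied to $|D^\bm f(\bx)\,w(\bx)|^p$ (the essential-supremum version when $p=\infty$) shows that for almost every $\bar{\bx}^e \in \RR^{d-|e|}$ the slice $\bx^e \mapsto D^\bm f(\bx^e,\bar{\bx}^e)$ belongs to $L_{p,w}(\RR^{|e|})$. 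Specialising to $\bs = 0$ (so that $\bm = (0,\bar{\bk}^e)$) yields in particular that $g \in L_{p,w}(\RR^{|e|})$, hence $g$ is locally integrable, for a.e.\ $\bar{\bx}^e$.

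The core step is to identify this slice of $D^\bm f$ with the weak $\bs$-derivative of $g$ on $\RR^{|e|}$. For test functions $\varphi \in C_c^\infty(\RR^{|e|})$ and $\psi \in C_c^\infty(\RR^{d-|e|})$, put $\Phi(\bx) := \varphi(\bx^e)\psi(\bar{\bx}^e)$; since the non-zero components of $\bm$ in $e$ and those outside $e$ do not overlap, $D^\bm \Phi(\bx) = D^\bs\varphi(\bx^e)\cdot D^{\bar{\bk}^e}\psi(\bar{\bx}^e)$. Applying the defining integration-by-parts identity of the weak derivative to $D^\bm f$ against $\Phi$ and to $D^{\bar{\bk}^e} f$ against $D^\bs\varphi(\bx^e)\psi(\bar{\bx}^e)$, and splitting each resulting integral across $e$ and its complement by Fubini, one obtains
\begin{equation*}
\int_{\RR^{d-|e|}} \psi(\bar{\bx}^e) \left[\int_{\RR^{|e|}} g(\bx^e)\,D^\bs\varphi(\bx^e)\,\rd\bx^e - (-1)^{|\bs|_1}\!\int_{\RR^{|e|}} D^\bm f(\bx^e,\bar{\bx}^e)\,\varphi(\bx^e)\,\rd\bx^e\right]\rd\bar{\bx}^e = 0.
\end{equation*}
Since $\psi$ is arbitrary, the bracket vanishes for almost every $\bar{\bx}^e$.

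The main technical point to handle carefully is that this exceptional null set a priori depends on $\varphi$. I would resolve it by choosing a countable family $\{\varphi_n\}\subset C_c^\infty(\RR^{|e|})$ that is dense in $C_c^\infty(\RR^{|e|})$ for convergence together with all derivatives on compact sets, taking $N$ to be the (still null) union of the exceptional sets associated with each $\varphi_n$ and each $\bs \le \bk^e$ (a finite collection), and, for $\bar{\bx}^e \notin N$, extending the identity from $\{\varphi_n\}$ to every $\varphi \in C_c^\infty(\RR^{|e|})$ by density, using the local integrability of $g$ and of the slice of $D^\bm f$ already established. For every $\bar{\bx}^e \notin N$, this shows that $D^\bs g$ exists on $\RR^{|e|}$ in the weak sense and coincides with the slice $D^\bm f(\cdot,\bar{\bx}^e) \in L_{p,w}(\RR^{|e|})$, which proves the lemma.
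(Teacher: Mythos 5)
Your proof is correct: the reduction to Fubini via the tensor-product factorisation $w(\bx)=w(\bx^e)w(\bar{\bx}^e)$, the identification of the sliced $D^{\bm}f$ with the weak $\bs$-derivative of $g$ through product test functions $\varphi\otimes\psi$ (with the composition $D^{\tilde{\bs}}(D^{\bar{\bk}^e}f)=D^{\bm}f$ justified since both weak derivatives exist), and the countable-density argument to absorb the $\varphi$-dependent null sets into a single exceptional set are exactly the standard route for this kind of slicing lemma. The paper itself gives no proof but defers to \cite[Lemma 3.2]{DD2023}, which proceeds along the same lines, so there is nothing to flag.
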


Based on the operators $v_k:= v_{1,k} \ k \in \NN_0$, defined in \eqref{v_k}, we construct approximation operators for functions in $L_{p,w}(\RRd)$  by using the well-known Smolyak algorithm. 
We  define for $k \in \NN_0$, the one-dimensional operators
\begin{equation*}\label{E}
	E_k f:= f - V_{2^k} f, \ \ \ k \in \NN_0.
\end{equation*}
For $\bk \in \NNd$, the $d$-dimensional operators $V_{2^\bk}$,  $v_\bk$ and $E_\bk$ are defined as the tensor  product of one-dimensional operators:
\begin{equation}\nonumber
	V_{2^\bk}:= \bigotimes_{i=1}^d V_{2^{k_i}} , \ \	
	v_\bk:= \bigotimes_{i=1}^d v_{k_i}, \ \ 	
	E_\bk:= \bigotimes_{i=1}^d E_{k_i}, 
\end{equation}
where $2^\bk:= (2^{k_1},\cdots, 2^{k_d})$ and 
the univariate operators $V_{2^{k_j}}$, $v_{k_j}$ and $E_{k_j}$ 
 are successively applied to the univariate functions $\bigotimes_{i<j} V_{2^{k_i}}(f)$, $\bigotimes_{i<j} v_{k_i}(f)$ and $\bigotimes_{i<j} E_{k_i} $, respectively, by considering them  as 
functions of  variable $x_j$ with the other variables held fixed. The operators $V_{2^\bk}$, $v_\bk$ and $E_\bk$ are well-defined for  functions  from $L_{p,w}(\RRd)$ for $1 \le p \le \infty$.

 Observe  that 
\begin{equation}\nonumber
v_\bk f =  \sum_{e \subset \brab{1,...,d}} (-1)^{d - |e|}V_{2^{\bk(e)}} f, 
\end{equation}
where  $\bk(e) \in \NNd_0$ is defined by $k(e)_i = k_i$, $i \in e$, and 	$k(e)_i = \max(k_i-1,0)$, $i \not\in e$.  We also have
\begin{equation}\nonumber
	\big(E_\bk f \big)(\bx)=  \sum_{e \subset \brab{1,...,d}} (-1)^{|e|} 
\big(V_{2^{\bk^e}}f(\cdot,\bar{\bx^e})\big)(\bx^e). 
\end{equation}

\begin{lemma} \label{lemma:E_k}
		Let $1\le p, q \le \infty$  and $r_{\lambda,p,q} >0$.    Then we have that
	\begin{equation*}\label{E_k}
	\big\|E_\bk f\big\|_{L_{q,w}(\RRd)}  
		\leq 2^{-r_{\lambda,p,q}|\bk|_1} \|f\|_{W^r_{p,w}(\RRd)}, 
		\ \  \bk \in \NNd_0, \ \ f \in W^r_{p,w}(\RRd).
	\end{equation*}
\end{lemma}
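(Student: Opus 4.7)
The plan is to reduce the bound on $E_{\bk}f$ to bounds on tensor-product dyadic blocks. Iterating the one-dimensional identity $I - V_{2^k} = \sum_{l \ge k+1} v_l$ (with $v_l$ from~\eqref{v_k}) in every coordinate gives the expansion
\begin{equation*}
	E_{\bk} f = \sum_{\bl > \bk} v_{\bl} f, \qquad v_{\bl} := \bigotimes_{i=1}^d v_{l_i},
\end{equation*}
with absolute convergence in $L_{q,w}(\RRd)$, where $\bl > \bk$ is understood coordinatewise (i.e.\ $l_i \ge k_i + 1$ for every $i$). By the triangle inequality it suffices to prove the per-block estimate
\begin{equation*}
	\|v_{\bl} f\|_{L_{q,w}(\RRd)} \ll 2^{-r_{\lambda,p,q}|\bl|_1}\|f\|_{W^r_{p,w}(\RRd)}
\end{equation*}
and then sum the geometric series $\sum_{\bl > \bk} 2^{-r_{\lambda,p,q}|\bl|_1} = \prod_i\sum_{l_i \ge k_i+1} 2^{-r_{\lambda,p,q}l_i} \ll 2^{-r_{\lambda,p,q}|\bk|_1}$, convergent because $r_{\lambda,p,q} = r_\lambda - \delta_{\lambda,p,q} > 0$.

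I would obtain the per-block estimate in two stages, mirroring the univariate argument in Lemma~\ref{lemma:v_k}. \emph{Stage (a):} iterate the univariate bound~\eqref{|f - I_m f|<} across coordinates to prove $\|v_{\bl} f\|_{L_{p,w}(\RRd)} \ll 2^{-r_\lambda|\bl|_1}\|f\|_{W^r_{p,w}(\RRd)}$. One peels off one direction at a time using Fubini, applies the univariate estimate on the slice (which belongs to $W^r_{p,w}(\RR)$ by Lemma~\ref{lemma:g(bx^e}), and uses the commutation $\partial_i^{m} v_{l_j} = v_{l_j}\partial_i^m$ for $i\ne j$. Since the inner and outer $L_{p,w}$ exponents coincide, every step collapses cleanly by Fubini and one ends up with a sum over $\bm \in \{0,\ldots,r\}^d$ of $\|D^\bm f\|_{L_{p,w}(\RRd)}^p = \|f\|_{W^r_{p,w}(\RRd)}^p$. \emph{Stage (b):} since $v_{\bl}f$ is a tensor-product polynomial of coordinate degree $<2^{l_i+1}$, I iterate the univariate Nikol'skii inequality of Lemma~\ref{lemma:B-NInequality} across coordinates to obtain the multivariate Nikol'skii-type bound
\begin{equation*}
	\|v_{\bl}f\|_{L_{q,w}(\RRd)} \ll 2^{\delta_{\lambda,p,q}|\bl|_1}\|v_{\bl}f\|_{L_{p,w}(\RRd)}.
\end{equation*}
Multiplying (a) and (b) yields the target block exponent $\delta_{\lambda,p,q} - r_\lambda = -r_{\lambda,p,q}$.

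The main technical difficulty is Stage~(b). A naive coordinate-by-coordinate iteration of the univariate Nikol'skii inequality produces nested mixed $L_{p,w}$/$L_{q,w}$ norms whose order must be swapped using Minkowski's integral inequality, which is available only when the inner exponent is no larger than the outer. This forces a brief case split: for $p \le q$ Minkowski swaps directly and an induction on $d$ proceeds smoothly, while for $p > q$ one argues instead via Mhaskar--Rakhmanov--Saff localization (a weighted polynomial of coordinate degree $<n_i$ is essentially concentrated on $\prod_i[-a_{n_i},a_{n_i}]$ with $a_{n_i} \asymp n_i^{1/\lambda}$) combined with H\"older's inequality on this bounded box, recovering the same product exponent $\prod_i n_i^{\delta_{\lambda,p,q}} \asymp 2^{\delta_{\lambda,p,q}|\bl|_1}$. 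Stage~(a) and the geometric-series summation are routine, and all implicit constants depend only on $r, d, p, q, \lambda, a, b$.
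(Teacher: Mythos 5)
Your argument is correct, but it assembles the multivariate bound along a genuinely different route than the paper. The paper proves Lemma~\ref{lemma:E_k} directly by induction on the coordinates: it applies the univariate estimate $\|g-V_{2^k}g\|_{L_{q,w}(\RR)}\ll 2^{-r_{\lambda,p,q}k}\|g\|_{W^r_{p,w}(\RR)}$ of Theorem~\ref{thm: V_n} to one-dimensional slices, commutes $E_{k_1}$ with $D^{(0,s_2)}$, and collapses the iterated integrals by Fubini; the dyadic blocks $v_{\bl}$ enter only afterwards, when the per-block estimate of Lemma~\ref{lemma:v_bk} is \emph{derived from} Lemma~\ref{lemma:E_k} via $v_{\bk}f=\sum_{e}(-1)^{|e|}E_{\bk(e)}f$. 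You reverse this order: you prove the per-block estimate first --- tensorizing the pure $L_{p,w}\to L_{p,w}$ Jackson bound, where Fubini is unproblematic because the inner and outer exponents coincide, and changing metrics only once at the end through a multivariate Nikol'skii inequality --- and then recover $E_{\bk}f=\sum_{\bl>\bk}v_{\bl}f$ by summing a geometric series (convergent since $r_{\lambda,p,q}>0$). This is not circular, since you do not invoke Lemma~\ref{lemma:v_bk}, and it buys genuine robustness for $p\neq q$: the paper's displayed computation tacitly replaces the $W^r_{p,w}$-norm on the right of the univariate bound by its $L_{q,w}$-based analogue, which is literally valid only for $p=q$, whereas a faithful coordinatewise iteration produces exactly the nested mixed $L_{p,w}/L_{q,w}$ norms whose reordering requires the Minkowski (for $p\le q$) or restricted-range-plus-H\"older (for $p>q$) analysis you isolate in Stage~(b). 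The price of your route is that you must supply the tensor-product Nikol'skii inequality yourself, with the case split you describe; both halves are standard (generalized Minkowski, respectively the Mhaskar--Rakhmanov--Saff infinite--finite range inequality applied coordinatewise followed by H\"older on the box $\prod_i[-a_{n_i},a_{n_i}]$), so the argument is complete. One cosmetic point: the constant in the lemma's display should be read as an absolute multiplicative constant (your $\ll$), as both your proof and the paper's produce one.
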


\begin{proof} The case $d=1$ of the lemma follows from Theorem \ref{thm: V_n}.  For simplicity we prove the lemma for the case $d=2$ and $q<\infty$. The general case can be proven in the same way by induction on $d$.
Indeed, by  applying successively the case  $d=1$ of the lemma with respect to variables $x_2$ and $x_1$ we obtain 	
		\begin{equation}\nonumber
		\begin{aligned}
			\big\|E_{(k_1,k_2)}f\big\|_{L_{q,w}(\RR^2)}^q
			& =
			\int_{\RR}\int_{\RR}\big|E_{k_2} (E_{k_1}f(x_1,x_2))\big|^q w(\bx)^q\rd x_2 \rd x_1
			\\
			&\leq 
			2^{-q r_{\lambda,p,q} k_2}	\int_{\RR} \sum_{s_2=0}^r\int_{\RR}
			\big|D^{(0,s_2)} (E_{k_1}f(x_1,x_2))\big|^q w(\bx)^q\rd x_2 \rd x_1
             \\
			&= 
				2^{-q r_{\lambda,p,q} k_2}	\int_{\RR} \sum_{s_2=0}^r\int_{\RR}
			\big| (E_{k_1}D^{(0,s_2)}f(x_1,x_2))\big|^q w(\bx)^q\rd x_2 \rd x_1
		   \\
		&\le
			2^{-q r_{\lambda,p,q} k_2}	\int_{\RR} \sum_{s_2=0}^r\int_{\RR}
			2^{-q r_{\lambda,p,q} k_1}	\sum_{s_1=0}^r\int_{\RR}
		\big| D^{(s_1,s_2)}f(x_1,x_2)\big|^q w(\bx)^q\rd x_1 \rd x_2
		      \\
		   &= 
		   2^{-q r_{\lambda,p,q} |\bk|_1}\sum_{|\bs|_\infty \le r}\int_{\RR^2}
		   \big| D^{(s)}f(\bx)\big|^q w(\bx)^q\rd \bx
 \\
&= 
2^{- q r_{\lambda,p,q}|\bk|_1} \|f\|_{W^r_{p,w}(\RR^2)}^q.   
		\end{aligned}
	\end{equation}	
\hfill
\end{proof}

We say that $\bk \to \infty$, $\bk \in \NNd_0$, if and only if $k_i \to \infty$ for every $i = 1,...,d$.
\begin{lemma} \label{lemma:v_bk}
	Let $1\le p, q \le \infty$  and $r_{\lambda,p,q} >0$.    Then we have that
	for every $ f \in W^r_{p,w}(\RRd)$,
		\begin{equation}\label{Series1}
f
	= 	\sum_{\bk \in \NNd_0}v_\bk f 
	\end{equation}
with absolute convergence in the space $L_{q,w}(\RRd)$  of the series, and
	\begin{equation}\label{Delta_k}
			\big\|v_\bk f\big\|_{L_{q,w}(\RRd)}  
		\leq C 2^{-r_{\lambda,p,q}|\bk|_1} \|f\|_{W^r_{p,w}(\RRd)}, 
		\ \  \bk \in \NNd_0.
	\end{equation}
\end{lemma}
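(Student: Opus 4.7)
The argument has two stages: first establish the coefficient bound~\eqref{Delta_k}, then deduce the series representation~\eqref{Series1} from it by combining absolute summability with an identification of the partial sums.

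For~\eqref{Delta_k}, I would follow the template of Lemma~\ref{lemma:E_k}. Since $v_\bk = \bigotimes_{i=1}^{d} v_{k_i}$ is a tensor product of univariate operators, each of which satisfies the univariate estimate
\[
\|v_{k_i} g\|_{L_{q,w}(\RR)} \ll 2^{-k_i r_{\lambda,p,q}} \|g\|_{W^r_{p,w}(\RR)}
\]
supplied by Lemma~\ref{lemma:v_k} (with $m=1$), one iterates the bound coordinate by coordinate using Fubini. Lemma~\ref{lemma:g(bx^e} furnishes the slice regularity of $f$ needed for each univariate application, and the commutation of $v_{k_i}$ with the differential operators $D^\bs$ (acting on disjoint coordinates) justifies the manipulations. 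Induction on $d$, with the base case $d=2$, $q<\infty$ following the displayed chain in the proof of Lemma~\ref{lemma:E_k} almost verbatim (only $E_{k_i}$ replaced by $v_{k_i}$), yields~\eqref{Delta_k}.

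For~\eqref{Series1}, summing \eqref{Delta_k} over $\bk\in\NNd_0$ gives
\[
\sum_{\bk \in \NNd_0} \big\|v_\bk f\big\|_{L_{q,w}(\RRd)} \ll \|f\|_{W^r_{p,w}(\RRd)} \prod_{i=1}^{d} \sum_{k_i=0}^{\infty} 2^{-r_{\lambda,p,q} k_i} < \infty,
\]
since $r_{\lambda,p,q} > 0$. Hence the series converges absolutely (and unconditionally) in the Banach space $L_{q,w}(\RRd)$ to some limit $\tilde f$. The univariate telescoping $\sum_{k\le n} v_k = V_{2^n}$ tensorizes to $\sum_{\bk \le \bn} v_\bk f = V_{2^\bn} f$, so $\tilde f = \lim_{\bn\to\infty} V_{2^\bn} f$ in $L_{q,w}(\RRd)$. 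To identify this limit with $f$, I would use the operator identity
\[
I - \bigotimes_{i=1}^d V_{2^{n_i}} = \sum_{j=1}^d \Bigl(\bigotimes_{i<j} V_{2^{n_i}}\Bigr)\otimes \bigl(I - V_{2^{n_j}}\bigr) \otimes \Bigl(\bigotimes_{i>j} I\Bigr),
\]
bounding each of the $d$ terms by combining the uniform $L_{q,w}(\RR)$-boundedness of the factors $V_{2^{n_i}}$ (from~\eqref{|V_m f|<}) with the univariate error $\|g - V_{2^{n_j}} g\|_{L_{q,w}(\RR)} \ll 2^{-n_j r_{\lambda,p,q}} \|g\|_{W^r_{p,w}(\RR)}$ from Theorem~\ref{thm: V_n} applied slice-wise. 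Each term tends to $0$, giving $\tilde f = f$.

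The main obstacle is the bookkeeping in the tensor-product iteration of the first stage when $p\ne q$: the univariate estimate exchanges an $L_{q,w}$-norm on the left for a $W^r_{p,w}$-norm on the right with different exponents, so the order in which Fubini and the univariate bound are applied in each coordinate has to be compatible. This difficulty is already resolved in the proof of Lemma~\ref{lemma:E_k}, and the present situation is formally identical since $v_k = V_{2^k} - V_{2^{k-1}}$ inherits the univariate bound enjoyed by $E_k = I - V_{2^k}$.
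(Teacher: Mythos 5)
Your proposal is correct and follows essentially the same route as the paper: bound each block $v_\bk f$ by lifting the univariate de la Vall\'ee Poussin estimate to $d$ dimensions, sum the resulting geometric series for absolute convergence, and identify the limit by showing $f - V_{2^\bk}f \to 0$ as $\bk\to\infty$. The only difference is bookkeeping: instead of re-running the tensor induction for $v_\bk$ and using your telescoping identity for $I - V_{2^\bk}$, the paper writes $v_\bk f = \sum_{e\subset\{1,\dots,d\}}(-1)^{|e|}E_{\bk(e)}f$ and $f - V_{2^\bk}f = \sum_{e\neq\varnothing}(-1)^{|e|}E_{\bk^e}f$ and invokes Lemma \ref{lemma:E_k} term by term.
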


\begin{proof}  The operator $v_\bk$ can be represented in the form
	\begin{equation}\nonumber
	v_\bk f
	= 	\sum_{e \subset \brab{1,...,d}} (-1)^{|e|} E_{\bk(e)} f.
\end{equation}	
Therefore, by using Lemma \ref{lemma:E_k} we derive that for every $f \in W^r_{p,w}(\RRd)$ and  $\bk \in \NNd_0$,
	\begin{equation}\nonumber
		\begin{aligned}
			\big\|v_\bk f\big\|_{L_{q,w}(\RRd)} 
			& \le
			\sum_{e \subset \brab{1,...,d}} \big\|E_{\bk(e)} f\big\|_{L_{q,w}(\RRd)}
			\\
			&\leq 
			\sum_{e \subset \brab{1,...,d}}  C 2^{-r_{\lambda,p,q}|\bk(e)|_1} \|f\|_{W^r_{p,w}(\RRd)}
			\le C 2^{-r_{\lambda,p,q}|\bk|_1} \|f\|_{W^r_{p,w}(\RRd)},
		\end{aligned}
	\end{equation}
which proves \eqref{Delta_k} and hence  the absolute convergence of the series in \eqref{Series1} follows.
 Notice that 
\begin{equation}\nonumber
f - V_{2^\bk}f
	= 	\sum_{e \subset \brab{1,...,d}, \ e \not= \varnothing} (-1)^{|e|} E_{\bk^e}f,
\end{equation}	
where recall $\bk^e \in \NNd_0$ is defined by $k^e_i = k_i$, $i \in e$, and 	$k^e_i = 0$, $i \not\in e$.
By using Lemma~\ref{lemma:E_k} we derive  for $\bk \in \NNd_0$ and $f \in W^r_{p,w}(\RRd)$,
\begin{equation}\nonumber
	\begin{aligned}
		\big\|f - V_{2^\bk}f\big\|_{L_{q,w}(\RRd)}  
		& \le
		\sum_{e \subset \brab{1,...,d}, \ e \not= \varnothing} \big\|E_{\bk^e} f\big\|_{L_{q,w}(\RRd)}
		\\
		&\leq 
		C \max_{e \subset \brab{1,...,d}, \ e \not= \varnothing} \ \max_{1 \le i \le d} 2^{-r_{\lambda,p,q}|k^e_i|} \|f\|_{W^r_{p,w}(\RRd)}
	\\
	&\leq 
	C \max_{1 \le i \le d} 2^{-r_{\lambda,p,q}|k_i|} \|f\|_{W^r_{p,w}(\RRd)},
	\end{aligned}
\end{equation}
which is going to $0$ when $\bk \to \infty$. This together with the obvious equality
	\begin{equation}\nonumber
V_{2^\bk} f
	= 	\sum_{\bk \in \NNd_0: \,  \bs \le \bk} v_{\bs}f
\end{equation}	
proves  \eqref{Series1}.
	\hfill
\end{proof}

 For $\xi > 0$, we define the de la Vall\'ee Poussin hyperbolic cross linear operator $\Vv_\xi$ for functions $f \in L_{q,w}(\RRd)$ by
	\begin{equation}\label{de la Valle Poussin Vv}
	\Vv_\xi f
	:= 	\sum_{\bk \in \NNd_0: \, |\bk|_1 \le \xi } v_{\bk} f.
\end{equation}	
Notice that the function $\Vv_\xi f$ belongs to the polynomial subspace 
	\begin{equation}\nonumber
	\Pp(\xi) 
	:= 	\operatorname{span}\brab{p_\bs: \,  \bs \in H(\xi)},
\end{equation}	
where
	\begin{equation}\nonumber
	H(\xi) 
	:= 	\bigcup_{\bk \in \NNd_0: \, |\bk|_1 \le \xi }\brab{\bs \in \NNd_0: \,  \bs < 2. 2^{2\bk} }.
\end{equation}	
From \eqref{|V_m f|<} it follows  that $\Vv_{\xi}$ is a linear bounded operator  in $L_{q,w}(\RRd)$ for $1 \le q \le \infty$, and
\begin{equation}\label{rank Vv}
	\operatorname{rank } \brac{\Vv_{\xi}} = |H(\xi) | 
	= \sum_{|\bk|_1 \le \xi} \prod_{j=1}^d (2^{k_j +1} - 1)
	\asymp 
	2^{\xi} \xi^{d - 1}.
\end{equation}

The multi-index set $H(\xi)$ consists of  the non-negative elements of the step hyperbolic cross
	\begin{equation}\nonumber
	\tilde{H}(\xi) 
	:= 	\bigcup_{\bk \in \NNd_0: \, |\bk|_1 \le \xi }\brab{\bs \in \ZZd: \,  |s_i| < 2^{2k_i}, \, i = 1,...,d },
\end{equation}	
which is similar by the form  to  the frequency set of  trigonometric polynomials used in the classical hyperbolic cross approximation (see \cite{DTU18B} for details). Hence with an abuse, we call an approximation by elements from subspaces $H(\xi)$ weighted hyperbolic cross polynomial approximation, and $\Vv_{\xi}f$ de la Vall\'ee Poussin hyperbolic cross sum of  the orthonormal polynomial expansion of $f$ with respect to   the multivariate  weight $w^2$.

In what follows, for short we write  $|\bk|_1 \le \xi$ as $\bk \in \NNd_0: \, |\bk|_1 \le \xi$ and etc., if there is not misunderstanding. Let 
$$\Pp_{2^\bk}:= 	\operatorname{span}\brab{p_\bs: \,  s_i \le 2^{k_i}, \, i = 1,...,d}.$$

For the proof of the following lemma, see \cite[Lemma A.2]{DD2024}.

\begin{lemma}  \label{lemma:IneqL_qNorm<L_pNorm}
	Let $1 \le  p, q < \infty$, $p \not= q$ and  
	$f \in L_{q,w}(\RRd)$ be represented by  the  series 
	\begin{equation*} 
		f \ = \sum_{\bk \in \NNd_0} \ \varphi_\bk, \ \varphi_\bk \in \Pp_{2^\bk},
	\end{equation*}
	converging in $L_{q,w}(\RRd)$.
	Then there holds the inequality 
	\begin{equation} \nonumber
		\|f\|_{L_{q,w}(\RRd)}
		\ \le C \left( \sum_{\bk \in \NNd_0} \ \|2^{\delta_{\lambda,p,q}|\bk|_1}  \varphi_\bk \|_{L_{p,w}(\RRd)}^q \right)^{1/q}, 
	\end{equation}
	with some constant $C$ depending at most on $\lambda,p,q,d$, whenever the right side is finite. 
\end{lemma}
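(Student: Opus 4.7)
This is a weighted Littlewood--Paley--Nikolskii inequality: it controls the $L_{q,w}$-norm of a sum $\sum_\bk\varphi_\bk$ by an $\ell^q$-sum of weighted $L_{p,w}$-norms of the polynomial summands $\varphi_\bk\in\Pp_{2^\bk}$, at the sharp Nikolskii exponent $\delta_{\lambda,p,q}$.

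My first step would be a multivariate block Nikolskii estimate. Since $\Pp_{2^\bk}=\bigotimes_{j=1}^d\Pp_{2^{k_j}}$ and $w$ factors as a tensor product, I would iterate the univariate Nikolskii inequality from parts (ii) and (iii) of Lemma~\ref{lemma:B-NInequality} (with $m=2^{k_j}$) in each coordinate, using Fubini and holding the remaining variables fixed. This yields
\begin{equation*}
\|\varphi_\bk\|_{L_{q,w}(\RRd)}\le C\,2^{\delta_{\lambda,p,q}|\bk|_1}\|\varphi_\bk\|_{L_{p,w}(\RRd)},\quad \bk\in\NNd_0.
\end{equation*}

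The second step assembles these blocks into the claimed $\ell^q$-sum. For $q=1$ this is immediate from Minkowski's inequality combined with the block estimate. For $1<q<\infty$ I would use duality: represent $\|f\|_{L_{q,w}(\RRd)}$ as a supremum of pairings $\int_{\RRd} fg\,\rd\bx$ against test functions $g$ in the dual weighted space; expand $f=\sum_\bk\varphi_\bk$ in each pairing and replace $g$ in the $\bk$th term by its de la Vall\'ee Poussin projection $V_{2^\bk}g$ at the matching scale, which is permissible because $V_{2^\bk}\varphi_\bk=\varphi_\bk$ for $\varphi_\bk\in\Pp_{2^\bk}$. Applying the multivariate block Nikolskii estimate on the dual side to $V_{2^\bk}g\in\Pp_{2^{\bk+\bone}}$ contributes the cancelling factor $2^{-\delta_{\lambda,p,q}|\bk|_1}$; a final H\"older step in the index $\bk$ with exponents $q$ and $q'$ delivers the required $\ell^q$-sum, while the dual-side sum remains bounded thanks to the $L_{p'}$-type boundedness of the projectors $V_{2^\bk}$ inherited from \eqref{|V_m f|<}.

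The main obstacle is the sharpness of $\delta_{\lambda,p,q}$. A more direct chain --- Minkowski in $L_{q,w}$, then the block bound, then H\"older to convert an $\ell^1$-sum over $\bk\in\NNd_0$ into an $\ell^q$-sum --- forces an extra geometric factor $(\sum_\bk 2^{-\varepsilon|\bk|_1 q'})^{1/q'}$ for summability, which shifts the exponent from $\delta_{\lambda,p,q}$ to $\delta_{\lambda,p,q}+\varepsilon$. Recovering the exact exponent is the whole point of the duality-plus-projection scheme, since the boundedness of $V_m$ on weighted $L_p$ spaces implements the right cancellation between the primal block bound and the dual-side Nikolskii gain at the critical exponent.
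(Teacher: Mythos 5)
First, a remark on the comparison: the paper does not prove this lemma itself but quotes it from \cite[Lemma A.2]{DD2024}, so your proposal can only be judged on its own merits. Your Step 1 — tensorizing the univariate Nikol'skii inequalities of Lemma \ref{lemma:B-NInequality} to obtain $\|\varphi_\bk\|_{L_{q,w}(\RRd)}\ll 2^{\delta_{\lambda,p,q}|\bk|_1}\|\varphi_\bk\|_{L_{p,w}(\RRd)}$ for $\varphi_\bk\in\Pp_{2^\bk}$ — is sound (modulo the standard use of Minkowski's integral inequality to commute the mixed norms when iterating coordinatewise), and the $q=1$ case does follow at once from it. You also correctly diagnose why the naive Minkowski--Nikol'skii--H\"older chain loses an $\varepsilon$ in the exponent.

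The gap is in the duality step for $1<q<\infty$. Writing $\langle f,g\rangle_w=\sum_\bk\langle\varphi_\bk,V_{2^\bk}g\rangle_w$ and applying H\"older plus Nikol'skii as you describe, the factor produced on the dual side by H\"older in the index $\bk$ is $\bigl(\sum_{\bk\in\NNd_0}\|V_{2^\bk}g\|_{L_{q',w}(\RRd)}^{q'}\bigr)^{1/q'}$ (the powers $2^{\pm\delta_{\lambda,p,q}|\bk|_1}$ cancel exactly, as you intend). This quantity is infinite: $V_{2^\bk}g\to g$ as $\bk\to\infty$, so the summands do not decay, and the uniform boundedness of the projectors $V_{2^\bk}$ coming from \eqref{|V_m f|<} cannot make an infinite sum of terms each comparable to $\|g\|_{L_{q',w}(\RRd)}^{q'}$ converge. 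The alternative of keeping $\|V_{2^\bk}g\|_{L_{p',w}(\RRd)}$ together with the convergent geometric factor $2^{-\delta_{\lambda,p,q}q'|\bk|_1}$ does make the sum over $\bk$ finite, but it leaves you with $\|g\|_{L_{p',w}(\RRd)}$, which is not controlled by $\|g\|_{L_{q',w}(\RRd)}\le1$, since $p'\ne q'$ and neither weighted Lebesgue space on $\RRd$ embeds into the other. The underlying obstruction is structural: $\{V_{2^\bk}\}_{\bk\in\NNd_0}$ is a nested rather than an (almost) orthogonal family, and the hypothesis only gives $\varphi_\bk\in\Pp_{2^\bk}$, not membership in a complemented spectral block, so no choice of reproducing projector yields a $q'$-summable dual family. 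Recovering the sharp exponent requires a genuinely different mechanism — for instance a distribution-function/real-interpolation argument that splits $f$ at a level $|\bk|_1\le N(y)$ depending on the height $y$, or a weighted Littlewood--Paley-type inequality for the difference blocks $v_\bk$ — which is what the cited \cite[Lemma A.2]{DD2024} has to supply.
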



\begin{theorem} \label{thm:Vv_xi-error}
	Let $1\le p, q \le \infty$ and $r_{\lambda,p,q} >0$.    Then we have that	for $\xi >1$,
\begin{equation}\label{upperbound}
	\big\|f - \Vv_\xi f\big\|_{L_{q,w}(\RRd)}  
	\ll 
	 \|f\|_{W^r_{p,w}(\RRd)}
\begin{cases}
	2^{- r_\lambda \xi} \xi^{d - 1}  &  \ \ \text{if} \ \ p = q, \\
	2^{- r_{\lambda,p,q}\xi} \xi^{(d - 1)/q} &  \ \ \text{if} \ \ p \not= q< \infty, \\
2^{- r_{\lambda,p,q}\xi} \xi^{d - 1} &  \ \ \text{if} \ \  q=\infty,
\end{cases}		
	\ \ \xi > 1, \  f \in W^r_{p,w}(\RRd). 
\end{equation}
\end{theorem}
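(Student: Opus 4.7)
The starting point is the series representation of Lemma~\ref{lemma:v_bk}: for every $f \in W^r_{p,w}(\RRd)$,
$$
f \ = \ \sum_{\bk \in \NNd_0} v_\bk f,
$$
with absolute convergence in $L_{q,w}(\RRd)$. Since $\Vv_\xi f = \sum_{|\bk|_1 \le \xi} v_\bk f$ by definition, the same convergence gives
$$
f - \Vv_\xi f \ = \ \sum_{\bk \in \NNd_0,\ |\bk|_1 > \xi} v_\bk f.
$$
The plan is to bound the $L_{q,w}(\RRd)$-norm of this tail in each of the three cases of \eqref{upperbound}, using the combinatorial identity $|\{\bk \in \NNd_0 : |\bk|_1 = n\}| = \binom{n+d-1}{d-1} \asymp n^{d-1}$ together with the elementary geometric-series asymptotics $\sum_{n>\xi} n^{d-1} 2^{-\alpha n} \asymp \xi^{d-1}\, 2^{-\alpha \xi}$ valid for any fixed $\alpha > 0$.

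For the end-point cases $p = q$ (where $\delta_{\lambda,p,p} = 0$ and hence $r_{\lambda,p,p} = r_\lambda$) and $q = \infty$, the triangle inequality suffices. Combining it with the block estimate $\|v_\bk f\|_{L_{q,w}(\RRd)} \ll 2^{-r_{\lambda,p,q} |\bk|_1} \|f\|_{W^r_{p,w}(\RRd)}$ from Lemma~\ref{lemma:v_bk}, I obtain
$$
\|f - \Vv_\xi f\|_{L_{q,w}(\RRd)} \ \ll \ \|f\|_{W^r_{p,w}(\RRd)} \sum_{n > \xi} n^{d-1}\, 2^{-r_{\lambda,p,q} n} \ \asymp \ \|f\|_{W^r_{p,w}(\RRd)}\, \xi^{d-1}\, 2^{-r_{\lambda,p,q}\xi},
$$
which yields the first and third lines of \eqref{upperbound}.

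The central case is $p \ne q$ with $q < \infty$, where the naive triangle inequality is off by a factor $\xi^{(d-1)(1-1/q)}$. The key observation is that each block $v_\bk f$ is a tensor polynomial lying in $\Pp_{2^{\bk+\bone}}$ (since $V_{2^k} f \in \Pp_{2^{k+1}-1}$), so Lemma~\ref{lemma:IneqL_qNorm<L_pNorm}, applied to the tail series after re-indexing by $\bk + \bone$, yields
$$
\|f - \Vv_\xi f\|_{L_{q,w}(\RRd)} \ \ll \ \bigg(\sum_{|\bk|_1 > \xi} 2^{q\delta_{\lambda,p,q} |\bk|_1}\, \|v_\bk f\|_{L_{p,w}(\RRd)}^q\bigg)^{1/q}.
$$
Now applying Lemma~\ref{lemma:v_bk} with both source and target equal to $p$ gives $\|v_\bk f\|_{L_{p,w}(\RRd)} \ll 2^{-r_\lambda |\bk|_1} \|f\|_{W^r_{p,w}(\RRd)}$; combined with $\delta_{\lambda,p,q} - r_\lambda = -r_{\lambda,p,q}$, the right-hand side is bounded by $\|f\|_{W^r_{p,w}(\RRd)}$ times
$$
\bigg(\sum_{n > \xi} n^{d-1}\, 2^{-q r_{\lambda,p,q} n}\bigg)^{1/q} \ \asymp \ \xi^{(d-1)/q}\, 2^{-r_{\lambda,p,q}\xi},
$$
which is the middle line of \eqref{upperbound}.

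The main obstacle is the $p \ne q < \infty$ case: one must replace $\ell_1$ summation by $\ell_q$ summation over dyadic blocks, and for this the Nikol'skii-type gain $\delta_{\lambda,p,q}$ provided by Lemma~\ref{lemma:B-NInequality} (packaged inside Lemma~\ref{lemma:IneqL_qNorm<L_pNorm}) is essential; without it, the triangle inequality would only give the worse power $\xi^{d-1}$. A minor technical point is checking that Lemma~\ref{lemma:IneqL_qNorm<L_pNorm} still applies when the summation is restricted to $|\bk|_1 > \xi$, which is immediate by setting $\varphi_\bk = 0$ for $|\bk|_1 \le \xi$ and noting that the resulting series still converges in $L_{q,w}(\RRd)$ to $f - \Vv_\xi f$.
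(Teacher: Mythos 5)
Your proposal is correct and follows essentially the same route as the paper: the dyadic decomposition $f-\Vv_\xi f=\sum_{|\bk|_1>\xi}v_\bk f$ from Lemma~\ref{lemma:v_bk}, the triangle inequality with the block estimate for $p=q$ and $q=\infty$, and Lemma~\ref{lemma:IneqL_qNorm<L_pNorm} combined with the $L_{p,w}$ block bound and the identity $\delta_{\lambda,p,q}-r_\lambda=-r_{\lambda,p,q}$ for $p\ne q<\infty$. Your extra remarks on re-indexing the blocks into $\Pp_{2^{\bk+\bone}}$ and on zeroing out the terms with $|\bk|_1\le\xi$ are harmless refinements of details the paper passes over silently.
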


\begin{proof}  From  Lemma \ref{lemma:v_bk} we derive that for $\xi >1$ and $f \in W^r_{p,w}(\RRd)$,
		\begin{equation}\label{Series1}
	f - \Vv_\xi f 
	= 	\sum_{|\bk|_1 > \xi } v_\bk f, \ \  v_\bk f \in \Pp_{2^{\bk}},
\end{equation}
with absolute convergence in the space $L_{q,w}(\RRd)$  of the series, and there holds  \eqref{Delta_k}. If $p \not= q$, applying Lemma	\ref{lemma:IneqL_qNorm<L_pNorm} and \eqref{Delta_k}, we obtain \eqref{upperbound}:
	\begin{equation}\nonumber
		\begin{aligned}
		\big\|f - \Vv_\xi f \big\|_{L_{q,w}(\RRd)}^q  
			& \ll
				\sum_{|\bk|_1 > \xi } \big\|2^{\delta_{\lambda,p,q}|\bk|_1}v_\bk f \big\|_{L_{p,w}(\RRd)}^q
				\ll \sum_{|\bk|_1 > \xi } 2^{-q r_{\lambda,p,q}|\bk|_1}  \|f\|_{W^r_{p,w}(\RRd)}^q
		\\
		&= 
	 \|f\|_{W^r_{p,w}(\RRd)}^q \sum_{|\bk|_1 > \xi } 2^{-q r_{\lambda,p,q}|\bk|_1} 
	\ll  2^{- q r_{\lambda,p,q}\xi} \xi^{d - 1} \|f\|_{W^r_{p,w}(\RRd)}^q.
		\end{aligned}
	\end{equation}
	
	If $p  = q$  or $q=\infty $, the upper bound \eqref{upperbound} can be derived similarly by using \eqref{Series1}, \eqref{Delta_k} and the inequality
	\begin{equation}\nonumber
			\big\|f - \Vv_\xi f \big\|_{L_{q,w}(\RRd)} 
			\le
			\sum_{|\bk|_1 > \xi } \big\|v_\bk f \big\|_{L_{q,w}(\RRd)}.			
	\end{equation}
	\hfill
\end{proof}

For given $1\le p,q \le \infty$ and $r \in \NN$ we make use of the abbreviations:
	\begin{equation}\nonumber
\lambda_n:=	\lambda_n(\bW^r_{p,w}(\RRd), L_{q,w}(\RRd)),  \ \ 		
d_n:=	d_n(\bW^r_{p,w}(\RRd), L_{q,w}(\RRd)).
\end{equation}

\begin{theorem} \label{thm:f-Ssf}
	Let $1\le p, q \le \infty$ and $r_{\lambda,p,q} >0$.  For every $n \in \NN$, let $\xi_n$ be the largest number such that $\operatorname{rank } \brac{\Vv_{\xi_n}} \le n$.
	 	Then we have that for $n \ge 2$,
	\begin{equation}\label{rho_n(W)}
	d_n \le \lambda_n
	\le 	\sup_{f \in \bW^r_{p,w}(\RRd)}\big\|f - \Vv_{\xi_n} f \big\|_{L_{q,w}(\RRd)} 
	\ll 
		\begin{cases}
		n^{-r_{\lambda}} (\log n)^{(r_{\lambda} + 1)(d-1)} &  \  \text{if} \ \ p = q, \\
		n^{-r_{\lambda,p,q}} (\log n)^{(r_{\lambda,p,q} + 1/q)(d-1)} &  \ \ \text{if} \ \ p \not= q<\infty,
		\\
		n^{-r_{\lambda,p,q}} (\log n)^{(r_{\lambda,p,q} + 1)(d-1)} &  \  \text{if} \ \ q=\infty.
	\end{cases}	
	\end{equation}
\end{theorem}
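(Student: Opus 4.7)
The plan is to combine the already-proved error bound in Theorem \ref{thm:Vv_xi-error} with the dimension count in \eqref{rank Vv} by calibrating $\xi_n$ correctly against $n$. No new functional-analytic input is needed: the inequality $d_n \le \lambda_n$ is the standard fact that any linear operator of rank $\le n$ produces an approximating subspace of dimension $\le n$, while the middle inequality $\lambda_n \le \sup_{f \in \bW^r_{p,w}(\RRd)} \|f - \Vv_{\xi_n} f\|_{L_{q,w}(\RRd)}$ is immediate because, by the very choice of $\xi_n$, the de la Vall\'ee Poussin hyperbolic cross operator $\Vv_{\xi_n}$ is a linear operator on $L_{q,w}(\RRd)$ of rank at most $n$.

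First I would establish the calibration $\xi_n \asymp \log n$ and $2^{\xi_n} \asymp n(\log n)^{-(d-1)}$. The upper estimate $2^{\xi_n}\xi_n^{d-1} \ll n$ is forced directly by \eqref{rank Vv} and the defining inequality $\operatorname{rank}(\Vv_{\xi_n}) \le n$. For the matching lower estimate I would use maximality of $\xi_n$: since the function $\xi \mapsto \operatorname{rank}(\Vv_\xi)$ is piecewise constant and each jump multiplies $2^\xi \xi^{d-1}$ by at most a fixed constant $C_d$ when $\xi$ increases by $1$, one has $\operatorname{rank}(\Vv_{\xi_n+1}) > n$ and hence $2^{\xi_n}\xi_n^{d-1} \gg n$. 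Solving $2^{\xi_n}\xi_n^{d-1} \asymp n$ for $\xi_n$ then yields $\xi_n = \log_2 n - (d-1)\log_2 \log_2 n + O(1) \asymp \log n$, which is all that is needed.

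Second I would substitute into the three cases of Theorem \ref{thm:Vv_xi-error}. For $p=q$:
\begin{equation*}
2^{-r_\lambda \xi_n}\xi_n^{d-1} \asymp \brac{n(\log n)^{-(d-1)}}^{-r_\lambda}(\log n)^{d-1} = n^{-r_\lambda}(\log n)^{(r_\lambda+1)(d-1)};
\end{equation*}
for $p\neq q<\infty$:
\begin{equation*}
2^{-r_{\lambda,p,q}\xi_n}\xi_n^{(d-1)/q} \asymp n^{-r_{\lambda,p,q}}(\log n)^{r_{\lambda,p,q}(d-1)+(d-1)/q} = n^{-r_{\lambda,p,q}}(\log n)^{(r_{\lambda,p,q}+1/q)(d-1)};
\end{equation*}
and for $q=\infty$ the same computation with $\xi_n^{d-1}$ in place of $\xi_n^{(d-1)/q}$ yields $n^{-r_{\lambda,p,q}}(\log n)^{(r_{\lambda,p,q}+1)(d-1)}$. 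These are exactly the three bounds displayed in \eqref{rho_n(W)}.

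I do not expect a genuine obstacle here: the theorem is a ``packaging'' statement converting the smoothness-scale estimate of Theorem \ref{thm:Vv_xi-error} into a width-scale estimate, with all the hard analytic work (Markov--Bernstein/Nikol'skii inequalities, the Smolyak tensorization in Lemmas \ref{lemma:E_k}--\ref{lemma:v_bk}, and the block-summation step via Lemma \ref{lemma:IneqL_qNorm<L_pNorm}) having already been done upstream. The only slightly delicate point is the two-sided estimate $\operatorname{rank}(\Vv_{\xi_n}) \asymp n$, which as indicated above follows from the boundedness of the one-step jump of $\operatorname{rank}(\Vv_\xi)$; once that is in hand the proof is an arithmetic substitution.
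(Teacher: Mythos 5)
Your proposal is correct and follows essentially the same route as the paper: reduce to Theorem \ref{thm:Vv_xi-error}, use \eqref{rank Vv} to calibrate $2^{\xi_n}\xi_n^{d-1}\asymp n$ (hence $2^{-\xi_n}\asymp n^{-1}(\log n)^{d-1}$ and $\xi_n\asymp\log n$), and substitute into the three cases. Your extra justification of the two-sided rank estimate via maximality of $\xi_n$ and the bounded one-step jump only makes explicit what the paper asserts directly.
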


\begin{proof}  
To prove the upper bound  \eqref{rho_n(W)} we approximate a function $f \in \bW^r_{p,w}(\RRd)$
  by using the linear operator $\Vv_{\xi}$. Let us prove the case $p \not= q < \infty$ of \eqref{rho_n(W)}. The cases $p = q$ and $q=\infty $ can be proven in a similar manner. 
 
  From \eqref{rank Vv} it follows 
$$
2^{\xi_n} \xi_n^{d - 1} \asymp \operatorname{rank } \brac{\Vv_{\xi_n}} \asymp n.
$$ 
Hence we deduce the asymptotic equivalences
$$
\ 2^{-\xi_n}  \asymp n^{-1} (\log n)^{d-1}, \ \  \xi_n \asymp \log n,
$$
which together with Theorem \ref{thm:Vv_xi-error}  yield that
	\begin{equation*}\label{I_xi-error}
		\begin{aligned}
		d_n \le \lambda_n
		&\le 
	\sup_{f\in \bW^r_{p,w}(\RRd)}	
	\big\|f - \Vv_{\xi_n} f\big\|_{L_{q,w}(\RRd)}   
		\\
		&
		\leq 
		C 2^{- r_\lambda \xi_n} \xi_n^{(d-1)/q}
		\asymp  	n^{-r_{\lambda,p,q}} (\log n)^{(r_{\lambda,p,q} + 1/q)(d-1)}.
		\end{aligned}
	\end{equation*}
The upper bound in \eqref{rho_n(W)} for the case $p \not= q<\infty$ is proven.
	\hfill
\end{proof}

For $\bk \in \NNd$, the $d$-dimensional operators  $s_\bk$ are defined as the tensor  product of one-dimensional operators:
\begin{equation}\nonumber
	s_\bk:= \bigotimes_{i=1}^d s_{k_i}. 
\end{equation}
 For $\xi > 0$, we define the linear operator $\Ss_\xi$ for functions $f \in L_{2,w}(\RRd)$ by
\begin{equation}\nonumber
	\Ss_\xi f
	:= 	\sum_{|\bk|_1 \le \xi } s_{\bk} f.
\end{equation}	
Notice that the function $\Ss_\xi f$ belongs to the polynomial subspace 
\begin{equation}\nonumber
	\Pp_1(\xi) 
	:= 	\operatorname{span}\brab{p_\bs: \,  \bs \in H(\xi)},
\end{equation}	
where
\begin{equation}\nonumber
	H_1(\xi) 
	:= 	\bigcup_{\bk \in \NNd_0: \, |\bk|_1 \le \xi }\brab{\bs \in \NNd_0: \,  \bs \le 2^{\bk} }.
\end{equation}	
Notice that  by \eqref{|S_m f|<} $\Ss_{\xi}$ is a linear bounded operator  in $L_{q,w}(\RRd)$ for $4/3 < q < 4$, and
\begin{equation}\label{rank Vv}
	\operatorname{rank } \brac{\Ss_{\xi}} = |H_1(\xi) | 
	= \sum_{|\bk|_1 \le \xi} \prod_{j=1}^d (2^{k_j} - 1)
	\asymp 
	2^{\xi} \xi^{d - 1}.
\end{equation} 

In a way similar to the proofs of Lemma \ref{lemma:v_bk} and Theorem \ref{thm:f-Ssf} we can prove the following results.
\begin{lemma} \label{lemma:s_bk}
	Let $4/3< p < 4$, $1\le q \le \infty$ and $r_{\lambda,p,q} >0$.    Then we have that
	for every $ f \in W^r_{p,w}(\RRd)$,
	\begin{equation}\nonumber
		f
		= 	\sum_{\bk \in \NNd_0}s_\bk f 
	\end{equation}
	with absolute convergence in the space $L_{q,w}(\RRd)$  of the series, and
	\begin{equation}\nonumber
		\big\|s_\bk f\big\|_{L_{q,w}(\RRd)}  
		\leq C 2^{-r_{\lambda,p,q}|\bk|_1} \|f\|_{W^r_{p,w}(\RRd)}, 
		\ \  \bk \in \NNd_0.
	\end{equation}
\end{lemma}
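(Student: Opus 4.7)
My plan is to mirror the proof of Lemma~\ref{lemma:v_bk} step for step, replacing the de la Vall\'ee Poussin operators $V_{2^k}$ by the Fourier partial sum operators $S_{2^k}$; the restriction $4/3 < p < 4$ is precisely what keeps every step valid. First I would introduce the one-dimensional Fourier error $\mathcal{E}_k f := f - S_{2^k} f$ (with the convention $\mathcal{E}_{-1} := I$) and its tensor product $\mathcal{E}_\bk := \bigotimes_{i=1}^d \mathcal{E}_{k_i}$, the Fourier analogs of $E_k$ and $E_\bk$.

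The crucial intermediate step is the multivariate Fourier error estimate
\begin{equation*}
\|\mathcal{E}_\bk f\|_{L_{q,w}(\RRd)} \le C\, 2^{-r_{\lambda,p,q}|\bk|_1} \|f\|_{W^r_{p,w}(\RRd)}, \quad \bk \in \NNd_0, \ f \in W^r_{p,w}(\RRd),
\end{equation*}
which is the exact Fourier counterpart of Lemma~\ref{lemma:E_k} and can be proved by the same variable-by-variable induction on $d$, invoking Theorem~\ref{thm: S_n} in each step and using that $\mathcal{E}_{k_i}$ commutes with partial derivatives in the other variables, together with Lemma~\ref{lemma:g(bx^e} for the integrability of one-dimensional sections. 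The hypothesis $4/3 < p < 4$ enters twice: Theorem~\ref{thm: S_n} assumes it, and the iteration silently needs the $L_{p,w}$-boundedness \eqref{|S_m f|<} of $S_{2^k}$ applied to the intermediate functions produced at each stage of the induction, which again requires exactly this range.

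Granted this estimate, the remainder of the argument follows Lemma~\ref{lemma:v_bk} verbatim. Since $s_k = \mathcal{E}_{k-1} - \mathcal{E}_k$, expanding the tensor product yields
$$s_\bk f = \sum_{e \subset \brab{1,\ldots,d}} (-1)^{|e|} \mathcal{E}_{\bk(e)} f$$
with $\bk(e)$ as defined before Lemma~\ref{lemma:E_k}, and the triangle inequality together with $|\bk(e)|_1 \ge |\bk|_1 - d$ gives the norm bound claimed in the lemma as well as the absolute convergence of $\sum_\bk s_\bk f$ in $L_{q,w}(\RRd)$. The identity $f = \sum_\bk s_\bk f$ is then obtained from the telescoping $S_{2^\bk} f = \sum_{\bs \le \bk} s_\bs f$ combined with the analogous decomposition $f - S_{2^\bk} f = \sum_{\varnothing \ne e} (-1)^{|e|+1} \mathcal{E}_{\bk^e} f$ and the multivariate error estimate, which forces $\|f - S_{2^\bk} f\|_{L_{q,w}(\RRd)} \to 0$ as $\bk \to \infty$. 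The main obstacle is the multivariate Fourier error estimate itself: unlike the de la Vall\'ee Poussin case, where $L_{p,w}$-boundedness of $V_{2^k}$ holds unconditionally, here one must carefully check that each application of $S_{2^k}$ in the tensor-product induction preserves the relevant norm, so that $4/3 < p < 4$ is exactly the sharp hypothesis under which the argument of Lemma~\ref{lemma:E_k} closes.
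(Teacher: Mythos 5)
Your proposal is correct and follows exactly the route the paper intends: the paper gives no explicit proof of this lemma, stating only that it is obtained ``in a way similar to the proofs of Lemma~\ref{lemma:v_bk} and Theorem~\ref{thm:f-Ssf}'', and your argument is precisely that adaptation, replacing $V_{2^k}$ by $S_{2^k}$, proving the Fourier analogue of Lemma~\ref{lemma:E_k} via the same variable-by-variable induction, and correctly identifying where the hypothesis $4/3<p<4$ (through \eqref{|S_m f|<} and Theorem~\ref{thm: S_n}) is needed. No gaps beyond those already implicit in the paper's own sketch.
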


\begin{theorem} \label{thm:Ss_xi-error}
	Let $4/3< p < 4$,  $1\le q \le \infty$ and $r_{\lambda,p,q} >0$.   For every $n \in \NN$, let $\xi_n$ be the largest number such that $\operatorname{rank} \brac{\Ss_{\xi_n}} \le n$.
	Then we have that for $n \ge 2$,
	\begin{equation}\nonumber
		d_n \le \lambda_n
		\le 	\sup_{f \in \bW^r_{p,w}(\RRd)}\big\|f - \Ss_{\xi_n} f \big\|_{L_{q,w}(\RRd)} 
		\ll 
		\begin{cases}
			n^{-r_{\lambda}} (\log n)^{(r_{\lambda} + 1)(d-1)} &  \  \text{if} \ \ p = q, \\
			n^{-r_{\lambda,p,q}} (\log n)^{(r_{\lambda,p,q} + 1/q)(d-1)} &  \ \ \text{if} \ \ p \not= q<\infty,
			\\
			n^{-r_{\lambda,p,q}} (\log n)^{(r_{\lambda,p,q} + 1)(d-1)} &  \  \text{if} \ \ q=\infty.
		\end{cases}	
	\end{equation}
\end{theorem}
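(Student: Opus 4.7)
The plan is to follow the two-step strategy of Theorem \ref{thm:f-Ssf}, replacing the de la Vall\'ee Poussin operator $\Vv_\xi$ with the Fourier operator $\Ss_\xi$ and Lemma \ref{lemma:v_bk} with Lemma \ref{lemma:s_bk}. The restriction $4/3<p<4$ is precisely what makes Lemma \ref{lemma:s_bk} available, since it guarantees the uniform boundedness of each univariate $S_m$ on $L_{p,w}(\RR)$.

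First I would establish the analog of Theorem \ref{thm:Vv_xi-error} for $\Ss_\xi$: namely, for $\xi > 1$ and $f \in W^r_{p,w}(\RRd)$,
\begin{equation*}
\|f - \Ss_\xi f\|_{L_{q,w}(\RRd)} \ll \|f\|_{W^r_{p,w}(\RRd)}\begin{cases}
2^{-r_\lambda \xi}\xi^{d-1} & \text{if } p = q,\\
2^{-r_{\lambda,p,q}\xi}\xi^{(d-1)/q} & \text{if } p \neq q < \infty,\\
2^{-r_{\lambda,p,q}\xi}\xi^{d-1} & \text{if } q = \infty.
\end{cases}
\end{equation*}
By Lemma \ref{lemma:s_bk}, $f = \sum_{\bk \in \NNd_0} s_\bk f$ in $L_{q,w}(\RRd)$ with $\|s_\bk f\|_{L_{q,w}(\RRd)} \ll 2^{-r_{\lambda,p,q}|\bk|_1}\|f\|_{W^r_{p,w}(\RRd)}$, so $f - \Ss_\xi f = \sum_{|\bk|_1 > \xi} s_\bk f$. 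For $p=q$ and for $q=\infty$ I would apply the triangle inequality directly together with the layer count $\#\{\bk \in \NNd_0 : |\bk|_1 = j\} \asymp j^{d-1}$ and the geometric sum $\sum_{j > \xi} j^{d-1}2^{-r_{\lambda,p,q}j} \asymp 2^{-r_{\lambda,p,q}\xi}\xi^{d-1}$. For $p \neq q < \infty$ I would instead invoke Lemma \ref{lemma:IneqL_qNorm<L_pNorm}; its hypothesis $\varphi_\bk \in \Pp_{2^\bk}$ is satisfied because $s_{k_j} = S_{2^{k_j}} - S_{2^{k_j-1}}$ lands in $\Pp_{2^{k_j}-1}$, so $s_\bk f \in \Pp_{2^\bk}$. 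Combined with Lemma \ref{lemma:s_bk} applied at the exponent $p$ in place of $q$ (giving $\|s_\bk f\|_{L_{p,w}(\RRd)} \ll 2^{-r_\lambda|\bk|_1}\|f\|_{W^r_{p,w}(\RRd)}$) and the same geometric summation, this produces the factor $\xi^{(d-1)/q}$ rather than $\xi^{d-1}$.

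Next I would combine this error bound with the cardinality estimate $\operatorname{rank}(\Ss_{\xi_n}) \asymp 2^{\xi_n}\xi_n^{d-1}$ displayed just before the theorem. Choosing $\xi_n$ as the largest number with $\operatorname{rank}(\Ss_{\xi_n}) \le n$ forces $\xi_n \asymp \log n$ and $2^{-\xi_n} \asymp n^{-1}(\log n)^{d-1}$. Substituting into each of the three cases of Step 1 produces the three asymptotic bounds in the theorem. The chain $d_n \le \lambda_n \le \sup_{f \in \bW^r_{p,w}(\RRd)} \|f - \Ss_{\xi_n}f\|_{L_{q,w}(\RRd)}$ is immediate from the definitions, since $\Ss_{\xi_n}$ is a linear operator whose range lies in the fixed polynomial subspace $\Pp_1(\xi_n)$ of dimension at most $n$.

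The main obstacle is the case $p \neq q < \infty$, where one must correctly identify the polynomial block containing $s_\bk f$ and verify the hypothesis of Lemma \ref{lemma:IneqL_qNorm<L_pNorm}. Everything else reduces to tensorized geometric summation parallel to the de la Vall\'ee Poussin argument, which is the parallelism the authors invoke when writing that the results can be proven \emph{in a way similar to} Lemma \ref{lemma:v_bk} and Theorem \ref{thm:f-Ssf}.
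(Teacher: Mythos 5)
Your proposal is correct and follows exactly the route the paper intends: the paper gives no explicit proof of Theorem \ref{thm:Ss_xi-error}, stating only that it is proved ``in a way similar to'' Lemma \ref{lemma:v_bk} and Theorem \ref{thm:f-Ssf}, and your argument is precisely that adaptation, with Lemma \ref{lemma:s_bk} replacing Lemma \ref{lemma:v_bk} and the same use of Lemma \ref{lemma:IneqL_qNorm<L_pNorm} in the case $p \neq q < \infty$. The details you supply (the block membership $s_\bk f \in \Pp_{2^\bk}$, the $p=q$ instance of Lemma \ref{lemma:s_bk} feeding the weighted $\ell_q$ sum, and the rank asymptotics forcing $\xi_n \asymp \log n$) are all accurate.
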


\section{Right convergence rate of $n$-widths}
\label{Asymptotic order of n-widths}
In this section, we  prove   the  right convergence rate of  $\lambda_n\big(\bW^r_{2,w}(\RRd), L_{2,w}(\RRd)\big)$ and   $d_n\big(\bW^r_{2,w}(\RRd), L_{2,w}(\RRd)\big)$ in the case when   the generating weight $w$ is given as in \eqref{w(x)} with $\lambda = 2, 4$.  

For $r \in \NN$ and $\bk \in \NNd_0$, we define
\begin{equation}\nonumber
	\rho_{\lambda,r,\bk}: = \prod_{j=1}^d \brac{k_j + 1}^{r_\lambda},
\end{equation}
where recall, $r_\lambda$ is given as in \eqref{r_lambda} and $\lambda$ as in \eqref{w(x)}.
Denote by $\Hh^{r_\lambda}_w(\RRd)$ the space of all   functions $f \in L_{2,w}(\RRd)$ represented by the  series \eqref{ONP-series} for which  the norm
\begin{equation}\nonumber
	\norm{f}{\Hh^{r_\lambda}_w(\RRd)} := \brac{\sum_{\bk \in \NNd_0} |\rho_{\lambda, r,\bk}\hat{f}(\bk)|^2}^{1/2}
\end{equation}
is finite.

For  functions $f \in \Hh^{r_\lambda}_w(\RRd)$, we construct a hyperbolic cross polynomial approximation based on truncations of the orthonormal polynomial series \eqref{ONP-series}. For the hyperbolic cross
$$G(\xi):= \brab{\bk \in \NNd_0: \rho_{\lambda,r,\bk} \le \xi}, \ \ \xi  \ge 1,$$
the truncation $S_\xi^*(f)$ of the  series \eqref{ONP-series} on this set is defined by
\begin{equation*}\label{S_xi}
S_\xi^*(f)	:= \sum_{\bk \in G(\xi)} \hat{f}(\bk) p_\bk.
\end{equation*}
Notice that $S_\xi^*$ is a linear projection from $L_2(\RRd,\gamma)$ onto the linear subspace $L(\xi)$ spanned by the orthonormal polynomials $p_\bk$, $\bk \in G(\xi)$, and $\dim L(\xi) = |G(\xi)|$.

We will need the following Tikhomirov lemma which is often used for lower estimation of Kolmogorov $n$-widths \cite[Theorem 1]{Tikh1960}. 
\begin{lemma} \label{lemma: Tikhomirov} 
If $X$ is a Banach space and $U$ the ball of radius $\rho >0$ in a linear $n+1$-dimensional subspace of $X$, then 
$$
d_n(U,X)=\rho.
$$
\end{lemma}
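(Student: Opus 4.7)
The plan is to prove the two inequalities $d_n(U,X)\le\rho$ and $d_n(U,X)\ge\rho$ separately. The upper bound is immediate from the definition of the Kolmogorov width by taking $L_n=\{0\}$ as the (at most) $n$-dimensional approximating subspace, which gives
\[
d_n(U,X) \le \sup_{f\in U}\|f\|_X = \rho.
\]
All the difficulty is in the lower bound.

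For the lower bound, let $E$ denote the $(n+1)$-dimensional subspace with $U=\{f\in E:\|f\|_X\le\rho\}$ and fix an arbitrary $n$-dimensional subspace $L_n\subset X$; I want to show that $\sup_{f\in U}\operatorname{dist}(f,L_n)\ge\rho$. Arguing by contradiction, I would suppose the supremum is strictly less than $\rho$, so that for every point $f$ on the sphere $S:=\{f\in E:\|f\|_X=\rho\}$ a best approximant $g_f\in L_n$ exists with $\|f-g_f\|_X<\rho$. My strategy is to upgrade the (possibly discontinuous) assignment $f\mapsto g_f$ to a continuous odd map $\phi:S\to L_n$ still satisfying $\|f-\phi(f)\|_X<\rho$, and then to reach a contradiction via the Borsuk--Ulam antipodal theorem. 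First I would cover the compact set $S$ by the open neighborhoods $U_{f_0}:=\{f\in S:\|f-g_{f_0}\|_X<\rho\}$, extract a finite subcover $U_{f_1},\dots,U_{f_N}$ together with a subordinate partition of unity $\{\psi_i\}$, and form $\phi_0(f):=\sum_i\psi_i(f)g_{f_i}$; the convex-combination structure of the open ball of radius $\rho$ then yields $\|f-\phi_0(f)\|_X<\rho$. Symmetrizing by $\phi(f):=\tfrac12(\phi_0(f)-\phi_0(-f))$ produces a continuous odd map into $L_n$ with
\[
\|f-\phi(f)\|_X \le \tfrac12\|f-\phi_0(f)\|_X + \tfrac12\|{-f}-\phi_0(-f)\|_X < \rho.
\]
Identifying $S\cong S^n$ and $L_n\cong\RR^n$, the Borsuk--Ulam theorem supplies some $f_0\in S$ with $\phi(f_0)=0$; but then $\|f_0-\phi(f_0)\|_X=\|f_0\|_X=\rho$, contradicting the strict inequality and completing the argument.

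The main obstacle will be manufacturing the continuous odd selection $\phi$: in a general Banach space the metric projection onto $L_n$ is set-valued and need not depend continuously on $f$, so the partition-of-unity construction followed by odd symmetrization is essential in order to reduce the problem to the standard Borsuk--Ulam theorem on $S^n$. Once this geometric-topological ingredient is in place, the remaining estimates are routine convexity and norm manipulations.
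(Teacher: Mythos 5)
The paper does not actually prove this lemma: it is quoted with a citation to Tikhomirov's 1960 paper, so there is no internal proof to compare against. Your argument is correct and is essentially the classical proof of the ball-width theorem (the one in the cited reference and in standard texts such as Pinkus): trivial upper bound via $L_n=\{0\}$, and lower bound by contradiction through an odd continuous map $S\to L_n$ and Borsuk's antipodal theorem, with the partition-of-unity trick correctly deployed to replace the (possibly discontinuous, set-valued) metric projection by a continuous selection, and odd symmetrization to make Borsuk--Ulam applicable. Two small points you should make explicit when writing it up: (i) the infimum in $d_n$ runs over subspaces of dimension \emph{at most} $n$, so either enlarge $L_n$ to dimension exactly $n$ (which only decreases the deviation) or note that an odd continuous map $S^n\to\RR^m$ with $m\le n$ still must vanish somewhere; (ii) you do not actually need best approximants to exist --- under the contradiction hypothesis any $g_f\in L_n$ with $\left\|f-g_f\right\|_X<\rho$ suffices, which avoids even mentioning attainment of the distance. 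Neither affects the validity of the argument.
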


\begin{theorem} 	\label{theorem:widths:p=q=2}
We can construct a sequence $\brab{\xi_n}_{n=2}^\infty$ with $|G(\xi_n)|\le n$ so that for $n \ge 2$,
\begin{equation}
\begin{aligned}\label{widths:p=q=2}
	\sup_{f\in \boldsymbol{\Hh}^{r_\lambda}_w(\RRd)} \norm{f - S_{\xi_n}^*(f)}{L_{2,w}(\RRd)} 
	&\asymp
	\lambda_n(\boldsymbol{\Hh}^{r_\lambda}_w(\RRd), L_{2,w}(\RRd))\\
	&\asymp 
	d_n(\boldsymbol{\Hh}^{r_\lambda}_w(\RRd), L_{2,w}(\RRd))
	\asymp n^{-r_\lambda} (\log n)^{r_\lambda (d - 1)}.
\end{aligned}		
	\end{equation}
\end{theorem}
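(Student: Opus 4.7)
The plan is to exploit the Hilbert-space structure at every step: since $L_{2,w}(\RRd)$ is a Hilbert space, we have $\lambda_n = d_n$, so it suffices to match the asymptotic from above by the linear projection $S_{\xi_n}^*$ and from below by Tikhomirov's lemma. The bridge between the index-set parameter $\xi$ and the dimensional parameter $n$ is the lattice-point count
\begin{equation*}
	|G(\xi)| = \Big|\Big\{\bk \in \NNd_0 : \prod_{j=1}^d (k_j+1) \le \xi^{1/r_\lambda}\Big\}\Big| \asymp \xi^{1/r_\lambda}(\log \xi)^{d-1},
\end{equation*}
which is the classical hyperbolic-cross counting estimate. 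Defining $\xi_n$ as the largest $\xi$ with $|G(\xi)|\le n$, this inversion gives $\xi_n \asymp n^{r_\lambda}(\log n)^{-r_\lambda(d-1)}$, and hence $\xi_n^{-1}\asymp n^{-r_\lambda}(\log n)^{r_\lambda(d-1)}$, which is precisely the target rate.

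For the \emph{upper bound}, I would use Parseval's identity \eqref{P-id}. For any $f \in \boldsymbol{\Hh}^{r_\lambda}_w(\RRd)$,
\begin{equation*}
	\|f-S_{\xi_n}^*(f)\|_{L_{2,w}(\RRd)}^2
	= \sum_{\bk \notin G(\xi_n)} |\hat{f}(\bk)|^2
	\le \xi_n^{-2}\sum_{\bk \notin G(\xi_n)} |\rho_{\lambda,r,\bk}\hat f(\bk)|^2
	\le \xi_n^{-2}\|f\|_{\Hh^{r_\lambda}_w(\RRd)}^2,
\end{equation*}
where the first inequality uses that $\rho_{\lambda,r,\bk}>\xi_n$ off $G(\xi_n)$. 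Taking the supremum over the unit ball gives the $\ll n^{-r_\lambda}(\log n)^{r_\lambda(d-1)}$ bound; since $S_{\xi_n}^*$ is a linear projection of rank $|G(\xi_n)|\le n$, this simultaneously bounds $\lambda_n$ (and hence $d_n$) from above.

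For the \emph{lower bound}, I would invoke Lemma~\ref{lemma: Tikhomirov}. Let $\tilde\xi_n$ be the smallest value of $\xi$ with $|G(\xi)| \ge n+1$; by the counting estimate $\tilde\xi_n \asymp \xi_n$. Consider the finite-dimensional subspace $L(\tilde\xi_n) = \operatorname{span}\{p_\bk : \bk \in G(\tilde\xi_n)\}$ of dimension at least $n+1$, and inside it the Hilbert ball
\begin{equation*}
	B := \Big\{\sum_{\bk \in G(\tilde\xi_n)} c_\bk p_\bk : \sum_{\bk \in G(\tilde\xi_n)} |c_\bk|^2 \le \tilde\xi_n^{-2}\Big\},
\end{equation*}
which has $L_{2,w}$-radius $\tilde\xi_n^{-1}$ by Parseval. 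For any $f \in B$,
\begin{equation*}
	\|f\|_{\Hh^{r_\lambda}_w(\RRd)}^2
	= \sum_{\bk \in G(\tilde\xi_n)} \rho_{\lambda,r,\bk}^2 |c_\bk|^2
	\le \tilde\xi_n^{2}\sum_{\bk \in G(\tilde\xi_n)} |c_\bk|^2 \le 1,
\end{equation*}
so $B \subset \boldsymbol{\Hh}^{r_\lambda}_w(\RRd)$. Lemma~\ref{lemma: Tikhomirov} then gives $d_n(\boldsymbol{\Hh}^{r_\lambda}_w(\RRd),L_{2,w}(\RRd)) \ge d_n(B,L_{2,w}(\RRd)) = \tilde\xi_n^{-1} \asymp n^{-r_\lambda}(\log n)^{r_\lambda(d-1)}$.

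The only step requiring genuine care is the counting estimate $|G(\xi)|\asymp \xi^{1/r_\lambda}(\log\xi)^{d-1}$ and the near-matching between $\xi_n$ (largest with $|G(\xi)|\le n$) and $\tilde\xi_n$ (smallest with $|G(\xi)|\ge n+1$); both follow from a standard induction on $d$ counting integer points under a hyperbolic surface, and the jumps in $|G(\xi)|$ are bounded by $O((\log\xi)^{d-1})$, which is absorbed by the $\asymp$ in the final answer. Everything else is a direct consequence of orthogonality in $L_{2,w}(\RRd)$ and the definition of $\|\cdot\|_{\Hh^{r_\lambda}_w(\RRd)}$ as a weighted $\ell_2$-norm of the coefficients.
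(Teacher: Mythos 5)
Your proof is correct and follows essentially the same route as the paper: Parseval's identity for the upper bound via the rank-$\le n$ truncation $S^*_{\xi_n}$, the hyperbolic-cross counting estimate to invert the relation between $\xi$ and $n$, and Tikhomirov's lemma applied to a suitably scaled Hilbert ball in $L(\tilde\xi_n)$ for the lower bound. The only (immaterial) difference is one of normalization: you read the threshold in $G(\xi)$ literally as $\rho_{\lambda,r,\bk}\le\xi$, which yields the exponents $\xi^{-1}$ and $\xi^{1/r_\lambda}(\log\xi)^{d-1}$, whereas the paper's displayed computations implicitly take the threshold on $\prod_{j}(k_j+1)$ and so carry $\xi^{-r_\lambda}$ and $\xi(\log\xi)^{d-1}$; both conventions give the same final rate.
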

\begin{proof}
Since 	$L_{2,w}(\RRd)$ is a Hilbert space, we have the equality 
$$
\lambda_n(\boldsymbol{\Hh}^{r_\lambda}_w(\RRd), L_{2,w}(\RRd))
= 
d_n(\boldsymbol{\Hh}^{r_\lambda}_w(\RRd), L_{2,w}(\RRd)).
$$
To prove the upper bounds in \eqref{widths:p=q=2} it is sufficient to construct a sequence $\brab{\xi_n}_{n=2}^\infty$ so that $|G(\xi_n)|\le n$ and
\begin{align}\label{f-S_xi(f)}
	\sup_{f\in \boldsymbol{\Hh}^{r_\lambda}_w(\RRd)} \norm{f - S_{\xi_n}^*(f)}{L_{2,w}(\RRd)}
	\ll
n^{-r_\lambda} (\log n)^{r_\lambda (d - 1)}.
\end{align}		
From Parseval's identity \eqref{P-id}  we have that for every $f \in \boldsymbol{\Hh}^{r_\lambda}_w(\RRd)$ and $\xi > 1$,
\begin{equation} \label{f-S_xi(f)2}
		\norm{f - S_{\xi}^*(f)}{L_{2,w}(\RRd)}
		\ll      
 \xi^{- r_\lambda}.
\end{equation}
Indeed,
\begin{equation} \nonumber
\begin{aligned}
	\norm{f - S_{\xi}^*(f)}{L_{2,w}(\RRd)}^2
	&=
	\sum_{\bk \notin G(\xi)} \hat{f}(\bk)^2
	\ll
	\xi^{- 2r_\lambda}	\sum_{\bk \notin G(\xi)} |\rho_{\lambda,r,\bk}\hat{f}(\bk)|^2
	\\&
		\ll      
	\xi^{- 2r_\lambda} \norm{f}{\Hh^{r_\lambda}_w(\RRd)} \le \xi^{- 2r_\lambda} .
\end{aligned}
\end{equation}
Let 	$\brab{\xi_n}_{n=2}^\infty$ be the sequence of $\xi_n$ defined as the largest number satisfying the condition 	$|G(\xi_n)|\le n$. From the relation $$|G(\xi_n)| \asymp \xi_n (\log \xi_n)^{d-1},$$
 see, e.g., \cite[page 130]{Tem93B}, we derive that 
 $$\xi_n^{-r_\lambda} \asymp n^{-r_\lambda} (\log n)^{r_\lambda(d-1)}$$
  which together with \eqref{f-S_xi(f)2} yields \eqref{f-S_xi(f)}.

To show the lower bounds of \eqref{widths:p=q=2} we will apply Lemma \ref{lemma: Tikhomirov}.  If  
$$
U(\xi):=\brab{f \in L(\xi): \norm{f }{L_{2,w}(\RRd)} \le 1}
$$ 
and $f \in U(\xi)$, then by Parseval's identity \eqref{P-id} and the definition of $\boldsymbol{\Hh}^{r_\lambda}_w(\RRd)$, similarly to \eqref{f-S_xi(f)2}, we deduce that
$$\norm{f}{\boldsymbol{\Hh}^{r_\lambda}_w(\RRd)}	\ll \xi^{r_\lambda}.$$
  This means that  
$$C \xi^{-r_\lambda} U(\xi) \subset \boldsymbol{\Hh}^{r_\lambda}_w(\RRd)$$
 for some $C > 0$. Let 	$\brab{\xi'_n}_{n=2}^\infty$ be the sequence of $\xi'_n$ defined as the smallest number satisfying the condition 	$$|G(\xi'_n)|\ge n + 1.$$
  Then $$\dim L(\xi'_n) = |G(\xi'_n)|\ge n + 1,$$
   and
similarly as in the upper estimation,  
$$(\xi'_n)^{-r_\lambda} \asymp n^{-r_\lambda} (\log n)^{(d-1)r_\lambda}.$$ 
By Lemma \ref{lemma: Tikhomirov} for the smallest quantity 
$d_n$ in \eqref{widths:p=q=2} we have that
\begin{align*}
d_n(\boldsymbol{\Hh}^{r_\lambda}_w(\RRd), L_{2,w}(\RRd))
	\ge
	d_n(C(\xi'_n)^{-r_\lambda} U(\xi'_{n+1}), L_{2,w}(\RRd))
	=
	C(\xi'_n)^{-r_\lambda}
	\asymp 
	n^{-r_\lambda} (\log n)^{r_\lambda(d-1)} .
\end{align*}

\hfill	
\end{proof}

\begin{theorem}\label{thm:norm-eq}
Let $\lambda = 2, 4$  for the generating univariate weight $w$  as in \eqref{w(x)}.  Then we have the norm equivalence
	\begin{equation}\label{norm-eq}
		\norm{f}{W^r_{2,w}(\RRd)}
		\asymp 
		\norm{f}{\Hh^{r_\lambda}_w(\RRd)}, \ \  f \in W^r_{2,w}(\RRd).
	\end{equation}
\end{theorem}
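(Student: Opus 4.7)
The plan is to prove \eqref{norm-eq} by first reducing it, via the tensor-product structure of the weight and both norms, to a univariate coefficient characterization, and then exploiting the algebraic structure of the one-dimensional orthonormal polynomials $\{p_k\}$ available when $\lambda\in\{2,4\}$. Expanding $f\in W^r_{2,w}(\RRd)$ as in \eqref{ONP-series}, writing $p_\bk=\bigotimes_{i}p_{k_i}$ and using that $w(\bx)=\prod_i w(x_i)$, each derivative factors as $D^\bs p_\bk(\bx)=\prod_{i}p_{k_i}^{(s_i)}(x_i)$. Applying Fubini and Parseval's identity \eqref{P-id} coordinate by coordinate reduces the problem to obtaining two-sided univariate estimates
\begin{equation}\label{plan:univ-goal}
\sum_{s=0}^{r}\sum_{j\in\NN_0}|\alpha^{(s)}_{k,j}|^2\asymp (1+k)^{2r_\lambda},\qquad k\in\NN_0,
\end{equation}
where $\alpha^{(s)}_{k,j}$ is defined by the expansion $p_k^{(s)}=\sum_{j}\alpha^{(s)}_{k,j}\,p_j$, together with a near-diagonal support property $\alpha^{(s)}_{k,j}=0$ whenever $|j-k|>C_\lambda s$. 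The latter lets one pass from the bilinear form obtained by squaring $\|D^\bs f\|_{L_{2,w}(\RRd)}$ back to a diagonal quadratic form in $\hat f(\bk)$ via Cauchy--Schwarz applied to the $O(1)$ off-diagonal cross terms.

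For $\lambda=2$ the weight $w^2$ is a constant multiple of a (possibly rescaled) Gaussian density and the $p_k$ are the corresponding normalized Hermite polynomials, which satisfy the exact identity $p_k'=c\sqrt{k}\,p_{k-1}$ for a constant $c=c(a)$. Iterating gives $p_k^{(s)}=c^s\sqrt{k!/(k-s)!}\,p_{k-s}$, so $\alpha^{(s)}_{k,j}=c^s\sqrt{k!/(k-s)!}\,\delta_{j,k-s}$ is exactly diagonal after an index shift, and \eqref{plan:univ-goal} reduces to the arithmetic identity $\sum_{s=0}^r k!/(k-s)!\asymp (1+k)^r$, matching $2r_\lambda=r$.

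For $\lambda=4$ no one-term derivative rule exists, and one appeals instead to the Bonan--Clark (ladder operator) formula $p_k'(x)=A_k(x)p_{k-1}(x)-B_k(x)p_k(x)$, in which $A_k,B_k$ are polynomials of bounded degree whose coefficients are governed by the Freud recurrence coefficients $a_n\asymp n^{1/4}$, the latter asymptotic following from the string equation for the quartic Freud weight (see \cite{Mha1996B,Lu07B}). Applying the three-term recurrence $xp_n=a_{n+1}p_{n+1}+a_n p_{n-1}$ to re-expand $A_kp_{k-1}-B_kp_k$ in the orthonormal basis yields $\supp\alpha^{(1)}_{k,\cdot}\subset\{k-3,k-1\}$, with $|\alpha^{(1)}_{k,k-1}|\asymp k^{3/4}=k^{1-1/\lambda}$. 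Iterating $s$ times produces the support estimate $\supp\alpha^{(s)}_{k,\cdot}\subset\{j:|j-k|\le 3s\}$, an upper bound $|\alpha^{(s)}_{k,j}|\ll k^{s(1-1/\lambda)}$ (which also follows directly from Lemma~\ref{lemma:B-NInequality}(i) applied to $p_k$), and a matching lower bound on the principal entry at $j=k-s$. This proves \eqref{plan:univ-goal} with $2r_\lambda=3r/2$.

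The main obstacle is the $\lambda=4$ case: without an exact one-term derivative identity, one must extract the precise leading order $k^{s(1-1/\lambda)}$ of the principal ladder coefficient while simultaneously controlling the lower-order terms, so that \eqref{plan:univ-goal} is a genuine two-sided estimate rather than merely an upper bound. Once the univariate near-diagonal equivalence is in hand, the first step closes the multivariate argument by Cauchy--Schwarz applied to the bounded set of off-diagonal cross terms, completing the proof of \eqref{norm-eq}.
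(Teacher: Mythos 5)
First, a point of comparison: the paper does not actually prove Theorem \ref{thm:norm-eq}; it is imported from \cite[Lemma 3.4]{DK2022} for $\lambda=2$ and from \cite{DD2024} for $\lambda=4$, so there is no in-paper argument to measure your proposal against. Your reduction to a univariate coefficient statement via the tensor-product structure of $w$, $p_\bk$ and Parseval's identity \eqref{P-id} is sound, and your treatment of $\lambda=2$ through the Hermite ladder identity $p_k'=c\sqrt{k}\,p_{k-1}$ is correct and complete as far as it goes: there the transfer matrix is exactly diagonal (after an index shift), $2r_\lambda=r$, and the equivalence follows at once.

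The genuine gap is the case $\lambda=4$, precisely the point you yourself flag as ``the main obstacle'' and then leave unresolved. Your univariate coefficient estimate together with the support bound $|j-k|\le 3s$ and Cauchy--Schwarz yields only the direction $\norm{f}{W^r_{2,w}(\RRd)}\ll\norm{f}{\Hh^{r_\lambda}_w(\RRd)}$. For the converse you must bound $\sum_k(1+k)^{2r_\lambda}|\hat f(k)|^2$ by $\sum_{s\le r}\|f^{(s)}\|^2=\sum_s\sum_j\big|\sum_k\hat f(k)\alpha^{(s)}_{k,j}\big|^2$, i.e.\ invert a banded triangular matrix, and here the off-diagonal entries are \emph{not} of lower order than the principal one: for $w^2=e^{-2ax^4}$ the Bonan--Clark formula combined with the three-term recurrence gives $p_n'=8aa_n\big[(a_{n-1}^2+a_n^2+a_{n+1}^2)p_{n-1}+a_{n-1}a_{n-2}p_{n-3}\big]$, so both surviving entries are $\asymp n^{3/4}$, with ratio tending to $3$. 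Consequently a lower bound on the $\ell^2$-norm of $j\mapsto\sum_k\hat f(k)\alpha^{(s)}_{k,j}$ cannot follow from the sizes and supports of the entries alone; one must exclude cancellation between the $p_{n-1}$ and $p_{n-3}$ contributions of neighbouring indices, for instance by observing that the limiting symbol of the associated banded operator is a nonzero multiple of $3+z^2$, which does not vanish on the unit circle, or by a duality/integration-by-parts argument. Without some such step the inequality $\norm{f}{\Hh^{r_\lambda}_w(\RRd)}\ll\norm{f}{W^r_{2,w}(\RRd)}$, and hence half of \eqref{norm-eq} for $\lambda=4$, is not established; this is presumably the substantive content of \cite{DD2024} that your sketch would need to reproduce.
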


This theorem  has been proven  in \cite[Lemma 3.4]{DK2022} for $\lambda = 2$, and in \cite{DD2024} for $\lambda = 4$.

Due to the norm equivalence \eqref{norm-eq} in Theorem \ref{thm:norm-eq}, we identify $W^r_{2,w}(\RRd)$ with $\Hh^{r_\lambda}_w(\RRd)$ for the cases  $\lambda = 2, 4$ and $r \in \NN$.
In these cases, Theorem \ref{theorem:widths:p=q=2} gives the following result on right asymptotic order of  linear $n$-widths  $\lambda_n\big(\bW^r_{2,w}(\RRd), L_{2,w}(\RRd)\big)$ and   Kolmogorov $n$-widths $d_n\big(\bW^r_{2,w}(\RRd), L_{2,w}(\RRd)\big)$.

\begin{theorem} 	\label{thm:widths:p=q=2W}
Let $\lambda =2, 4$. We have for $n \ge 2$,
\begin{align}\label{W-sampling-widths:p=q=2}
	\lambda_n(\bW^r_{2,w}(\RRd), L_{2,w}(\RRd)) 
	=
		d_n(\bW^r_{2,w}(\RRd), L_{2,w}(\RRd)) 
	\asymp 
	n^{- r_\lambda} (\log n)^{r_\lambda(d-1)}.
\end{align}		
\end{theorem}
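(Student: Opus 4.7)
The plan is to combine the two preceding results as an assembly step: the Hilbert-space coincidence $\lambda_n = d_n$ in $L_{2,w}(\RRd)$, the norm equivalence in Theorem \ref{thm:norm-eq}, and the asymptotic formula in Theorem \ref{theorem:widths:p=q=2}. The theorem is really a packaging statement, so there should be no substantial new difficulty.

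First, I would recall from the introduction that since $L_{2,w}(\RRd)$ is a Hilbert space, the linear and Kolmogorov $n$-widths of any centrally symmetric compact set $F \subset L_{2,w}(\RRd)$ coincide, that is, $\lambda_n(F, L_{2,w}(\RRd)) = d_n(F, L_{2,w}(\RRd))$. This immediately reduces \eqref{W-sampling-widths:p=q=2} to verifying the asymptotic for $d_n$ alone.

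Next, for $\lambda = 2, 4$, Theorem \ref{thm:norm-eq} gives a two-sided norm equivalence between $W^r_{2,w}(\RRd)$ and $\Hh^{r_\lambda}_w(\RRd)$. Consequently, there exist positive constants $c_1, c_2$ such that
\begin{equation*}
	c_1 \boldsymbol{\Hh}^{r_\lambda}_w(\RRd) \subset \bW^r_{2,w}(\RRd) \subset c_2 \boldsymbol{\Hh}^{r_\lambda}_w(\RRd).
\end{equation*}
By the elementary monotonicity and positive homogeneity of Kolmogorov $n$-widths, this yields
\begin{equation*}
	d_n(\bW^r_{2,w}(\RRd), L_{2,w}(\RRd)) \asymp d_n(\boldsymbol{\Hh}^{r_\lambda}_w(\RRd), L_{2,w}(\RRd)).
\end{equation*}

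Finally, Theorem \ref{theorem:widths:p=q=2} evaluates the right-hand side as $n^{-r_\lambda}(\log n)^{r_\lambda(d-1)}$, and chaining the equivalences gives the claimed asymptotic order. Since the ``hard work'' — the construction of the hyperbolic cross projection $S^*_\xi$, the upper bound via Parseval's identity, and the lower bound via Tikhomirov's lemma applied to balls in $L(\xi'_n)$ — is all carried out in Theorem \ref{theorem:widths:p=q=2}, and the analytic core of the reduction (the embedding $W^r_{2,w}(\RRd) \hookrightarrow \Hh^{r_\lambda}_w(\RRd)$ and its converse, which is specific to the two quadratic-type weights $\lambda=2,4$) is already established in Theorem \ref{thm:norm-eq}, I foresee no substantive obstacle; the only thing worth being explicit about is that the constructive hyperbolic cross approximant $S^*_{\xi_n}$ realizing the upper bound in the $\Hh^{r_\lambda}_w$-norm automatically realizes it in the $W^r_{2,w}$-norm, which is exactly what ``constructive'' in the statement of the theorem refers to.
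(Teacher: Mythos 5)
Your proposal is correct and follows exactly the paper's own route: the paper likewise identifies $W^r_{2,w}(\RRd)$ with $\Hh^{r_\lambda}_w(\RRd)$ via the norm equivalence of Theorem \ref{thm:norm-eq} and then invokes Theorem \ref{theorem:widths:p=q=2}. Your explicit mention of the unit-ball sandwich and the monotonicity/homogeneity of $n$-widths just spells out what the paper leaves implicit.
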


We conjecture that the right convergence rate \eqref{W-sampling-widths:p=q=2} is still holds true at least  for every even $\lambda$.

\medskip
\noindent
{\bf Acknowledgments:}  
This work is funded by the Vietnam National Foundation for Science and Technology Development (NAFOSTED) in the frame of the Vietnamese-Swiss Joint Research Project 
under  Grant IZVSZ2$_{ - }$229568. 
A part of this work was done when  the author was working at the Vietnam Institute for Advanced Study in Mathematics (VIASM). He would like to thank  the VIASM  for providing a fruitful research environment and working condition. 
	
\bibliographystyle{abbrv}
\bibliography{WeightedApproximation}
\end{document}